\documentclass{amsart}

\usepackage{color}
\usepackage{bm}
\usepackage{mathtools}
\usepackage{amsfonts}
\usepackage{amssymb}
\usepackage{graphicx}
\usepackage{epstopdf}
\usepackage{bbold}
\usepackage{amsopn}
\usepackage{multirow}
\usepackage{array}
\usepackage{algorithm}
\usepackage{algpseudocode}
\usepackage{cite}
\usepackage{url}
\usepackage{hyperref}

\algrenewcommand{\algorithmicrequire}{\textbf{Input:}}
\algrenewcommand{\algorithmicensure}{\textbf{Output:}}

\theoremstyle{plain}
\newtheorem{theorem}{Theorem}[section]
\newtheorem{lemma}[theorem]{Lemma}

\theoremstyle{definition}
\newtheorem{remark}[theorem]{Remark}

\newcommand{\hL}{\mathcal{L}}
\newcommand{\dd}{\mathrm{d}}
\newcommand{\abs}[1]{\left\vert #1 \right\vert}
\newcommand{\norm}[1]{\left\| #1 \right\|}
\newcommand{\vertiii}[1]{\left|\!\left|\!\left| #1 \right|\!\right|\!\right|} 
\newcommand{\e}{\mathrm{e}}

\newcommand{\RR}{\mathbb{R}}

\newcommand{\hI}{\mathcal{I}}

\newcommand{\halp}{\widehat{\alpha}}
\newcommand{\hbet}{\widehat{\beta}}
\newcommand{\htaumax}{\widehat{\tau}_{\max}}
\newcommand{\ttaumax}{\widetilde{\tau}_{\max}}
\newcommand{\hxzero}{\widehat{x}_0}

\allowdisplaybreaks[4]

\DeclareGraphicsExtensions{.eps,.pdf,.png,.jpg}

\hypersetup{
  pdftitle={Original energy dissipation of ETDRK methods},
  pdfauthor={C. Quan, P. Zheng, and Z. Zhou}
}

\begin{document}

\title[Original energy dissipation of ETDRK methods]{A unified framework for original energy dissipation of equilibrium-preserving exponential Runge--Kutta methods for $L^2$ gradient flows}

\author{Chaoyu Quan}
\address{School of Science and Engineering, The Chinese University of Hong Kong, Shenzhen, P.~R.~China
(\href{mailto:quanchaoyu@cuhk.edu.cn}{quanchaoyu@cuhk.edu.cn})}
\author{Pinzhong Zheng and Zhi Zhou}
\address{Department of Applied Mathematics, The Hong Kong Polytechnic University, Hong Kong
(\href{mailto:pinzhong.zheng@connect.polyu.hk}{pinzhong.zheng@connect.polyu.hk},
\href{mailto:zhi.zhou@polyu.edu.hk}{zhi.zhou@polyu.edu.hk})}

\begin{abstract}
This paper develops a unified framework for establishing original energy dissipation of equilibrium-preserving exponential time differencing Runge--Kutta (ETDRK) methods for $L^2$ gradient flows. The analysis is based on an auxiliary continuous evolution representation of a general ETDRK method, which allows the energy variation to be controlled without imposing positive-definiteness conditions on the Butcher coefficients or introducing stabilization terms. For a broad class of equilibrium-preserving ETDRK schemes, we derive an explicitly computable time-step restriction. Explicit restrictions are obtained for representative schemes of orders two through five, and the framework also applies to arbitrarily high-order interpolation-based ETDRK methods. For the latter class, the resulting admissible step sizes are substantially larger than those obtained from the previous interpolation-specific analysis. Numerical experiments for the Allen--Cahn equation confirm the theoretical dissipation restrictions.
\end{abstract}

\subjclass[2020]{65M12, 65M15, 65R20}

\keywords{Exponential time differencing, Runge--Kutta methods, energy dissipation law, gradient flows.}

\maketitle

\section{Introduction}\label{sec:intro}

This paper focuses on the construction and analysis of energy-decreasing time discretizations for $L^2$ gradient flows.
Given a free energy functional $E[u]$, the $L^2$ gradient flow is defined as the evolution equation
\begin{equation*}
    u_t = - \frac{\delta E}{\delta u},
\end{equation*}
supplemented with suitable boundary conditions, where $\frac{\delta E}{\delta u}$ is the variational derivative of $E[u]$. A classical reaction--diffusion example, after scaling, is
\begin{equation}\label{eq:L2-gradient-flow-intro}
    u_t = \varepsilon \Delta u + \frac{1}{\varepsilon} f(u),\quad x\in\Omega\subset\mathbb{R}^d,\; t>0,
\end{equation}
where $0<\varepsilon\ll 1$ is the interfacial width parameter and $f$ is a double-well nonlinearity. This equation is associated with the Ginzburg--Landau energy functional \cite{ginzburg2009theory}:
\begin{equation*}
    E(u)=\int_\Omega \left(\frac{\varepsilon}{2}|\nabla u|^2+\frac{1}{\varepsilon}F(u)\right)  \,\dd x,\quad F'(u)=-f(u).
\end{equation*}
Under periodic boundary conditions or homogeneous Neumann boundary conditions, it satisfies the energy dissipation law
\begin{equation*}
    \frac{d E(u)}{dt} = \left(\frac{\delta E}{\delta u}, u_t \right) = - \norm{u_t}^2  \le 0,\quad t>0,
\end{equation*}
where $(\cdot, \cdot)$ and $\|\cdot\|$ represent the standard $L^2$ inner product and norm.
From a numerical perspective, preserving this intrinsic dissipation structure at the discrete level is highly desirable and beneficial for long-time simulations.

Implicit Runge--Kutta (RK) methods have long been favored for structure preservation. For example, Hairer and Lubich gave a systematic analysis of energy dissipation for implicit methods applied to general gradient flows in \cite{HairerLubich2014IMA_iRK}. It is proved that implicit RK methods of arbitrary order preserve energy dissipation for \eqref{eq:L2-gradient-flow-intro} when the time step $\tau$ is sufficiently small satisfying
\[
    \tau \le C_{\rm ImRK}\varepsilon,
\]
where $C_{\rm ImRK}$ is a constant independent of $\varepsilon$, depending only on the RK coefficients and the upper bound of $f^{\prime}$; see \cite[Theorem 2.1]{HairerLubich2014IMA_iRK} for details.
On the other hand, explicit RK methods are more convenient to implement and computationally more efficient. For example, Fu, Tang, and Yang established in \cite{FuTangYang2022JSC_eRK} both the maximum bound principle and the energy dissipation of explicit RK methods for \eqref{eq:L2-gradient-flow-intro}, provided some positive-definiteness constraints on the RK coefficients and the following CFL-type condition:
\begin{equation*}
    \tau \le C_{\rm ExRK}  \min\!\left\{\varepsilon,\frac{h^2}{\varepsilon}\right\},
\end{equation*}
where $h$ denotes the spatial mesh size, and $C_{\rm ExRK}$ is a constant independent of $\varepsilon$, depending only on the RK coefficients.

Exponential time-differencing Runge–Kutta (ETDRK) methods \cite{HochbruckEtAl1998EI, Cox2002JCP_ETD,KassamTrefethen2005SISC_ETDRK4,HochbruckOstermann2010ActaNum_EI,Du2021SIAMRev_MP4ETD} integrate the linear stiff component exactly via exponential operators and approximate the nonlinear evolution using Runge–Kutta formulas, providing high-order accuracy and improved stability for stiff evolution equations. The construction of high-order ETDRK methods has progressed rapidly, from the classical stiff fourth-order work \cite{Hochbruck2005SINUM_OrderCondition} to the stiff order theory based on exponential B-series \cite{Luan2013ExpBseries}, fifth-order \cite{Luan2014HighOrderExpRK} and sixth-order \cite{LuanAlhsmy2024ExpRK6} methods. 
When applied to gradient flows, the first-order ETDRK1 method has been shown to preserve energy dissipation unconditionally in \cite{Ju2018MathComp_ETD_EnergyStability}.
Later, in \cite{Fu2022JCP_ED4ETD2}, Fu and Yang rewrote the energy difference in quadratic form and established unconditional energy dissipation for the ETDRK2 scheme under some positive-definiteness conditions.
This analysis has been extended to higher-order ETDRK schemes in \cite{Fu2025ChinaMath_ED4HighOrderETD} and to matrix-valued gradient flows in \cite{Quan2024IMA_matrixAC}.
Similar ideas have also been used to establish the energy dissipation of implicit-explicit RK (IMEX-RK) methods \cite{Fu2024MathComp_ED4IMRK} and corrected integrating factor RK (IFRK) methods \cite{Liao2024JCP_correctedIMRK}.
However, we mention that this positive-definiteness framework is somehow restrictive and cannot be applied to all ETDRK methods, and a fourth-order energy-dissipative ETDRK method is still challenging to discover.
Recently, a special class of interpolation-based high-order ETDRK methods has been proved to preserve energy dissipation under some time step-size restrictions in \cite{Quan_25_IMA_ETD_MP}.

In addition, some other energy stable high-order methods for gradient flows have been proposed and extensively studied, such as scalar auxiliary variable (SAV) methods \cite{Shen2019SIAMRev_SAV}, invariant energy quadratization (IEQ) methods \cite{Yang2016JCP}, operator splitting methods \cite{LiDQ2022JCP_SplittingAC_1,LiDQ2022JCP_SplittingAC_2}, and Lagrange multipliers \cite{Cheng2025arXiv_lagrangemultiplier_MPandED}. These approaches can preserve only modified (rather than original) energy dissipation, or involve solving a nonlinear equation at each time step.

Despite the aforementioned advances, existing energy-dissipation analyses for ETDRK schemes either rely on positive-definiteness conditions on the coefficients \cite{Fu2022JCP_ED4ETD2,Fu2025ChinaMath_ED4HighOrderETD} or are limited to special interpolation-based constructions \cite{Quan_25_IMA_ETD_MP}. In this work, we develop a unified framework for a broad class of equilibrium-preserving ETDRK methods. The framework covers standard classical- and stiff-order ETDRK schemes and yields explicitly computable step-size restrictions for original energy dissipation:
$$
    \tau \le C_{\rm ETDRK}\varepsilon,
$$
where the constant $C_{\rm ETDRK}$ is explicitly computable, independent of $\varepsilon$, and depends only on the ETDRK coefficients and the Lipschitz constant of $f$. 
This framework has been applied to some classical ETDRK schemes up to the fifth order, to obtain explicit step-size restrictions for energy dissipation.
Furthermore, for arbitrarily high-order interpolation-based ETDRK methods studied in \cite{Quan_25_IMA_ETD_MP}, the step-size restrictions derived from our new framework are much weaker than the previous results; see comparisons in Table~\ref{tab:tau_max_comparison_Quan}.

The paper is organized as follows.
Section~\ref{sec:preliminary} introduces the ETDRK schemes and an auxiliary partial differential equation (PDE) form.
Section~\ref{sec:ED_ETDRK} establishes the original energy dissipation for general ETDRK methods under mild time-step restrictions.
Section~\ref{sec:explicit_time_step_restriction} computes explicit step-size restrictions for several well-known ETDRK methods of orders from two to five, and for a class of arbitrarily high-order interpolation-based ETDRK schemes.
Section~\ref{sec:numerical_experiments} performs numerical experiments to verify the theoretical results.

\section{Preliminary}\label{sec:preliminary}

In this work, we consider the following $L^2$ gradient flow:
\begin{equation}\label{eq:L2-gradient-flow}
    \left\{
    \begin{aligned}
         & u_{t}      = \varepsilon \hL u + \frac{1}{\varepsilon} f(u), &  & \bm{x} \in \Omega, ~  t \in(0,T], \\
         & u(\bm{x},0)  =u^0({\bm{x}}),                                 &  & \bm{x} \in \bar{\Omega},
    \end{aligned}
    \right.
\end{equation}
on a bounded domain $\Omega\subset\mathbb{R}^d$ ($d=1,2,3$), equipped with homogeneous Neumann or periodic boundary conditions.
Here the linear operator $\hL$ is self-adjoint, negative semidefinite, and has compact resolvent; a typical example is the Laplacian $\Delta$.
In this work, we assume that $f$ is globally Lipschitz continuous with constant $C_L$.
This equation \eqref{eq:L2-gradient-flow} can be viewed as an $L^2$ gradient flow with respect to the energy functional
\begin{equation}\label{eq:E}
    {E(u)=-\frac{\varepsilon}{2}\left(\hL u, u\right) + \frac{1}{\varepsilon}\left(F(u), 1\right) ,}
\end{equation}
where $(\cdot, \cdot)$ represents the standard $L^2$ inner product and $F'(u)=-f(u)$.
The energy dissipation law is formulated as
\begin{equation*}
    \frac{\dd}{\dd t} E(u)=\left(\frac{\delta E(u)}{\delta u}, \frac{\partial u}{\partial t}\right)=-\left\|{\partial_{t} u}\right\|^2 \leq 0, \quad \forall t>0,
\end{equation*}
where $\|\cdot\|$ is the standard $L^2$ norm.

\begin{remark}\label{rem:lipschitz}
This global Lipschitz assumption will be used in the energy-dissipation analysis of ETDRK methods below.
For schemes with a discrete maximum bound principle, the numerical solution remains in an interval on which $f$ is Lipschitz.
Such principles have been established for several ETDRK schemes, see e.g.\ \cite{Du2019SINUM_MP4ETD,Du2021SIAMRev_MP4ETD}.
On the other hand, for some high-order ETDRK schemes without a maximum bound principle, the energy-dissipation analysis still applies whenever the numerical solution remains bounded, as in the experiments of Section~\ref{sec:numerical_experiments}.
\end{remark}

We first state a useful inequality in the following lemma.

\begin{lemma}\label{lem:scalar_lipschitz}
For all \(u,v\in\mathbb R\), one has
\begin{equation}\label{Lipschitz_cond}
  F(u)-F(v)-f(u)(v-u)\ge -\frac{C_L}{2}(v-u)^2.
\end{equation}
\end{lemma}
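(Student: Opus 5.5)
Since $F'=-f$, the quantity $F(u)-F(v)-f(u)(v-u)$ is the first-order Taylor remainder of $F$ at $u$ evaluated at $v$, because $F'(u)(v-u)=-f(u)(v-u)$. More precisely, by the fundamental theorem of calculus,
\[
F(v)-F(u)=\int_u^v F'(s)\,\dd s=-\int_u^v f(s)\,\dd s.
\]
I will express this remainder as a single integral and bound it using only the global Lipschitz continuity of $f$ with constant $C_L$. No differentiability of $f$ is needed: $F$ is $C^1$ with Lipschitz derivative, so the integral representation is valid.

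First, I rewrite the left-hand side. Parametrize $s=u+\theta(v-u)$ with $\theta\in[0,1]$. Then
\begin{align*}
F(u)-F(v)-f(u)(v-u)&=\int_u^v f(s)\,\dd s-f(u)(v-u)\\
&=(v-u)\int_0^1\bigl(f(u+\theta(v-u))-f(u)\bigr)\,\dd\theta .
\end{align*}

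Second, I bound the integrand. By the Lipschitz property,
\[
\abs{f(u+\theta(v-u))-f(u)}\le C_L\theta\abs{v-u}.
\]
Hence the product
\[
(v-u)\bigl(f(u+\theta(v-u))-f(u)\bigr)
\]
is bounded below by $-C_L\theta(v-u)^2$. This holds regardless of the sign of $v-u$, since we only use the absolute value of the product.

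Third, I integrate this lower bound over $\theta\in[0,1]$, using $\int_0^1\theta\,\dd\theta=\tfrac12$. This gives exactly $-\tfrac{C_L}{2}(v-u)^2$, which is \eqref{Lipschitz_cond}.

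There is no real obstacle here. The only point requiring care is the sign bookkeeping: one must bound the product $(v-u)\bigl(f(\cdot)-f(u)\bigr)$ from below rather than $(f(\cdot)-f(u))$ alone, so that the cases $v<u$ and $v>u$ are handled simultaneously.
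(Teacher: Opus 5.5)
Your proof is correct and follows essentially the same route as the paper: both write $F(u)-F(v)-f(u)(v-u)=\int_u^v\bigl(f(w)-f(u)\bigr)\,\dd w$ and bound the integrand by $C_L\abs{w-u}$. The only difference is that your substitution $w=u+\theta(v-u)$ covers both orderings of $u$ and $v$ at once, whereas the paper treats $u=v$, $u<v$, and $u>v$ as separate cases.
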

\begin{proof}
If \(u=v\), then \eqref{Lipschitz_cond} is an equality. If \(u<v\), then for any \(w\in[u,v]\),
\[
|f(w)-f(u)|\le C_L(w-u),
\]
so \(f(w)\ge f(u)-C_L(w-u)\). Since \(f=-F'\), we have
\[
F(u)-F(v)=\int_u^v f(w)\,\dd w
\ge \int_u^v (f(u)-C_L(w-u))\,\dd w
=f(u)(v-u)-\frac{C_L}{2}(v-u)^2.
\]
If \(u>v\), then for any \(w\in[v,u]\),
\[
|f(w)-f(u)|\le C_L(u-w),
\]
so \(f(w)\le f(u)+C_L(u-w)\), and thus
\[
F(v)-F(u)=\int_v^u f(w)\,\dd w
\le \int_v^u (f(u)+C_L(u-w))\,\dd w
=-f(u)(v-u)+\frac{C_L}{2}(v-u)^2.
\]
Hence \eqref{Lipschitz_cond} holds for all \(u,v\in\mathbb R\).
\end{proof}

\subsection{General ETDRK schemes}

ETDRK methods are constructed by applying the variation-of-constants formula to the system \eqref{eq:L2-gradient-flow}, yielding
\begin{equation}\label{eq:L2-gradient-flow-mild}
    u(t)= \e^{\varepsilon\hL(t-t_0)} u(t_0)+ \int_{0}^{t-t_0} \e^{\varepsilon\hL(t-t_0-s)} g(u(t_0+s)) \,\dd s,
\end{equation}
where \[
g \coloneqq \frac{1}{\varepsilon}f.
\]
Let $u^{n}$ denote the numerical solution at the $n$-th time step $t = t_n$. To advance the solution to the next time step $t = t_{n+1}$, ETDRK methods approximate the integral in \eqref{eq:L2-gradient-flow-mild} over the interval $[t_n, t_{n+1}]$. For instance, by replacing $g(u(s))$ with $g(u^n)$ in \eqref{eq:L2-gradient-flow-mild}, one obtains the first-order ETDRK (ETDRK1) scheme:
\begin{equation*}
    u^{n+1}=\e^{\tau\varepsilon\hL} u^{n}+ \tau \varphi_1(\tau\varepsilon\hL) g(u^n),
\end{equation*}
where $\tau$ is the time step size, and the function $\varphi_1$ is defined by \[
    \varphi_1(z) \coloneqq \int_{0}^{1} \e^{(1-s)z} \,\dd s = \left( \e^z-1 \right)z^{-1}.
\]

In general, an $r$-stage ETDRK scheme can be written as follows \cite{Hochbruck2005SINUM_OrderCondition}:
\begin{subequations}\label{eq:ETDRK}
\begin{align}
    v_1     & =u^n, \label{eq:ETDRK_v1}\\
    v_i     & = \e^{c_i \tau \varepsilon \hL} u^n+\tau \sum_{j=1}^{i-1} a_{i j}(\tau \varepsilon \hL) g(v_j), \quad i=2,3, \ldots, r, \label{eq:ETDRK_stage}\\
    u^{n+1} & = \e^{\tau \varepsilon \hL} u^n+\tau\sum_{j=1}^r b_j(\tau \varepsilon \hL) g(v_j), \label{eq:ETDRK_update}
\end{align}
\end{subequations}
where the coefficients \(a_{ij}\) and \(b_j\) are combinations of the \(\varphi\)-functions:
\begin{equation}\label{def:phi_k}
    \begin{aligned}
         & \varphi_0(z) \coloneqq \e^z, \quad \varphi_k(z) \coloneqq \int_0^1 \e^{(1-\theta)z} \frac{\theta^{k-1}}{(k-1)!} \,\dd \theta, \quad k \geq 1, \\
         & \varphi_{k,i}(z) \coloneqq \varphi_k(c_i z), \quad  i=2,3,\ldots,r, \quad k \geq 0.
    \end{aligned}
\end{equation}
The same scheme can be represented by the Butcher tableau \eqref{butcher_tableau}. Here the tableau entries are generally functions; when \(\hL=0\), they reduce to constants and the method recovers a classical explicit RK scheme.
\begin{equation}\label{butcher_tableau}
    \renewcommand{\arraystretch}{1.2}
    \begin{array}{c|cccc}
        0      & ~      & ~      & ~          & ~   \\
        c_2    & a_{21} & ~      & ~          & ~   \\
        \vdots & \vdots & \ddots & ~          & ~   \\
        c_r    & a_{r1} & \cdots & a_{r, r-1} & ~   \\
        \hline
               & b_1    & \cdots & b_{r-1}    & b_r \\
    \end{array}
\end{equation}

For later energy estimates, we impose the following assumptions on the ETDRK scheme:
\begin{equation}\label{assump:ci}
c_i\in[0,1],\qquad 1\le i\le r,
\end{equation}
and
\begin{equation}\label{assump:ab-finite}
\sup_{\tau\ge 0,\varepsilon\ge 0}\vertiii{a_{ij}(\tau \varepsilon \hL)}<\infty,\qquad
\sup_{\tau\ge 0,\varepsilon\ge 0}\vertiii{b_i(\tau \varepsilon \hL)}<\infty,\qquad
1\le i\le r,\ 1\le j<i,
\end{equation}
where the operator norm is defined by
\[
    \vertiii{\mathcal{T}} \coloneqq \sup_{v\in L^2(\Omega)\setminus\{0\}} \frac{\norm{\mathcal{T}v}}{\norm{v}}.
\]
For convenience, we denote the finite suprema as
\begin{equation}\label{def_alpha_beta}
    \alpha_{ij} \coloneqq \sup_{\tau\ge 0,\varepsilon\ge 0}\vertiii{a_{ij}(\tau \varepsilon \hL)},
    \quad \beta_i \coloneqq \sup_{\tau\ge 0,\varepsilon\ge 0}\vertiii{b_i(\tau \varepsilon \hL)},
    \quad 1\le i\le r,\ 1\le j<i.
\end{equation}

An essential property of ETDRK schemes is equilibrium preservation \cite{Hochbruck2005SINUM_OrderCondition}. More precisely, let \(u^*\) be an equilibrium of \eqref{eq:L2-gradient-flow}, i.e., \(\varepsilon\hL u^*+g(u^*)=0\). 
We call the scheme \eqref{eq:ETDRK} equilibrium preserving if for $u^n=u^*$, it holds true that \(v_i=u^*\) for all \(1\le i\le r\) and \(u^{n+1}=u^*\).
A sufficient condition for equilibrium preservation of \eqref{eq:ETDRK}  is 
\begin{align}
     & \sum_{j=1}^r b_j(z) = \frac{\e^z-1}{z} = \varphi_1(z), \label{cond_equilibria_bj}\\
     & \sum_{j=1}^{i-1} a_{i j}(z) = \frac{\e^{c_i z}-1}{z} = c_i\varphi_1(c_i z), \quad i=2,3,\ldots,r. \label{cond_equilibria_aij}
\end{align}
In the stiff-order theory of ETDRK methods \cite[Table 2]{Hochbruck2005SINUM_OrderCondition}, \eqref{cond_equilibria_bj} is the first-order condition, while \eqref{cond_equilibria_aij} is part of the second-order conditions. This means that any stiff-order ETDRK method of order at least two necessarily preserves equilibria.

\subsection{Auxiliary PDE representation}
Since the coefficients $b_j(z)$ are linear combinations of the $\varphi_{k}(z)$, by the definition \eqref{def:phi_k}, we can write $b_j(z)$ as
\begin{equation}\label{eq:b_j_z}
    b_j(z) = \int_0^1 \e^{(1-\theta)z}p_{j}(\theta)  \,\dd \theta, \quad 1 \le j \le r,
\end{equation}
where $p_{j}(\theta)$ are polynomials.

The condition \eqref{cond_equilibria_bj} implies
\begin{equation*}
    \int_0^1 \e^{(1-\theta)z} \sum_{j=1}^r p_{j}(\theta)  \,\dd \theta = \sum_{j=1}^r b_j(z) = \varphi_1(z) = \int_0^1 \e^{(1-\theta)z}   \,\dd \theta,
\end{equation*}
for all \(z\in\mathbb C\). Let
\[
q(\theta)\coloneqq \sum_{j=1}^r p_j(\theta)-1.
\]
Then \(q\) is a polynomial and
\[
\int_0^1 \e^{(1-\theta)z} q(\theta)\,\dd\theta=0,\qquad \forall z\in\mathbb C.
\]
Differentiating at \(z=0\) yields, for every \(m\ge0\),
\[
\int_0^1 (1-\theta)^m q(\theta)\,\dd\theta=0.
\]
Hence \(\int_0^1 r(\theta)q(\theta)\,\dd\theta=0\) for every polynomial \(r\) (since \(\{(1-\theta)^m\}_{m\ge0}\) spans the polynomial space). Taking \(r=q\), we get
\[
\int_0^1 q(\theta)^2\,\dd\theta=0,
\]
so \(q\equiv0\) on \([0,1]\). Therefore,
\begin{equation}\label{sum_pj}
    \sum_{j=1}^r p_{j}(\theta)  = 1.
\end{equation}
Note that
\begin{equation}\label{eq:sum_b_p}
    {\tau \sum_{j=1}^r b_j(\varepsilon\tau \hL) g(v_j)}
    = \tau \int_0^1  \e^{(1-\theta)\varepsilon\tau \hL} \sum_{j=1}^r p_{j}(\theta)\, g(v_j)  \,\dd \theta
    = \int_{0}^{\tau} \e^{(\tau-s)\varepsilon\hL} P(s)  \,\dd s.
\end{equation}
where 
\begin{equation}\label{def_P_s}
    P(s) \coloneqq \sum_{j=1}^{r} p_{j}\left(\frac{s}{\tau}\right)  g(v_j), \quad s \in [0,\tau].
\end{equation}

This identity suggests a \emph{PDE representation} of the ETDRK scheme. Consider the auxiliary PDE:
\begin{equation} \label{eq:w}
    \left\{
    \begin{aligned}
         & \partial_{s} w^{n}  =\varepsilon\hL w^{n}+ P(s), &  & \bm{x} \in \Omega ,~ s \in(0, \tau], \\
         & w^{n}(\bm{x},0) =u^n,                            &  & \bm{x} \in \bar\Omega ,
    \end{aligned}
    \right.
\end{equation}
subject to appropriate boundary conditions.
According to \eqref{eq:sum_b_p}, the numerical solution of the ETDRK scheme satisfies
\begin{equation}\label{eq:ETDRK_solution}
    {u^{n+1} = \e^{\varepsilon\tau\hL} u^n+\tau\sum_{j=1}^r b_j(\varepsilon\tau \hL) g(v_j) = \e^{\varepsilon\tau\hL} u^n+ \int_{0}^{\tau} \e^{(\tau-s)\varepsilon\hL} P(s)  \,\dd s = w^{n}(\tau),}
\end{equation}
which is exactly the solution of \eqref{eq:w} at time $s = \tau$. Thus, the ETDRK scheme can be interpreted as solving the auxiliary problem \eqref{eq:w} over the interval $[0,\tau]$, which will be helpful for our subsequent energy analysis.

\section{Energy dissipation for general ETDRK schemes}\label{sec:ED_ETDRK}

In this section, we present a unified framework to prove that a broad class of equilibrium-preserving ETDRK methods preserve the original energy dissipation under mild time step-size restrictions.
The time derivative of the energy functional along the solution $w^{n}(s)$ of the auxiliary problem \eqref{eq:w} is given by
\begin{equation}\label{eq:dEwdt}
    \begin{aligned}
        \frac{\mathrm{d}}{\mathrm{d} s} E(w^{n}(s))
         & = \left(\frac{\delta E(w^{n})}{\delta w^{n}} ,\partial_s w^{n}\right)     \\
         & = -\left(\varepsilon\hL w^{n} + g(w^{n}),\partial_s w^{n}\right)          \\
         & = -\norm{\partial_s w^{n}}^2+\left(P(s)-g(w^{n}),\partial_s w^{n}\right),
    \end{aligned}
\end{equation}
leading to the following lemma.
\begin{lemma}\label{lemma:energy_decay}
  Assume \(f\) is globally Lipschitz with constant \(C_L\).
  Let \(\{u^{n}\}_{n\ge 0}\) be the numerical solution to \eqref{eq:L2-gradient-flow} obtained from the ETDRK scheme \eqref{eq:ETDRK}.
    Then
    \begin{equation}\label{ieq:aim}
        E(u^{n+1}) -E(u^{n})  \le \frac{1}{2}\left(-\frac{1}{\tau} + \frac{C_L}{\varepsilon}\right) \norm{u^{n+1}-u^{n}}^2  + \frac{1}{2}\int_{0}^{\tau} \norm{P(s)-g(u^n)}^2 \,\dd s,
    \end{equation}
    where $P(s)$ is defined in \eqref{def_P_s}.
\end{lemma}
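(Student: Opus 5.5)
The idea is to compare $E$ with an auxiliary energy in which the nonlinear part is linearized at $u^n$. That functional decays along the auxiliary evolution \eqref{eq:w} up to a forcing error, and the linearization error is controlled by Lemma~\ref{lem:scalar_lipschitz}. Define
\[
E_n(w) \coloneqq -\frac{\varepsilon}{2}(\hL w,w) + \frac{1}{\varepsilon}\bigl(F(u^n) - f(u^n)(w-u^n),1\bigr).
\]
It satisfies $E_n(u^n)=E(u^n)$, and its variational derivative is $-\varepsilon\hL w - g(u^n)$.

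\textbf{Step 1 (decay of $E_n$ along the auxiliary flow).} I differentiate $E_n(w^n(s))$ along the solution of \eqref{eq:w}, as in \eqref{eq:dEwdt}, but with $g(w^n)$ replaced by $g(u^n)$:
\[
\frac{\dd}{\dd s}E_n(w^n(s)) = -\bigl(\varepsilon\hL w^n + g(u^n),\partial_s w^n\bigr) = -\norm{\partial_s w^n}^2 + \bigl(P(s)-g(u^n),\partial_s w^n\bigr).
\]
By Young's inequality this is at most $-\tfrac12\norm{\partial_s w^n}^2+\tfrac12\norm{P(s)-g(u^n)}^2$. Integrating over $[0,\tau]$ and using $w^n(\tau)=u^{n+1}$ from \eqref{eq:ETDRK_solution} gives
\[
E_n(u^{n+1}) - E(u^n) \le -\frac12\int_0^\tau\norm{\partial_s w^n}^2\,\dd s + \frac12\int_0^\tau \norm{P(s)-g(u^n)}^2\,\dd s .
\]

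\textbf{Step 2 (Cauchy--Schwarz in time).} Since $u^{n+1}-u^n=\int_0^\tau \partial_s w^n\,\dd s$, the Cauchy--Schwarz inequality gives $\norm{u^{n+1}-u^n}^2\le \tau\int_0^\tau\norm{\partial_s w^n}^2\,\dd s$. Substituting this into Step 1 produces the term $-\frac{1}{2\tau}\norm{u^{n+1}-u^n}^2$.

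\textbf{Step 3 (linearization error).} We have
\[
E(u^{n+1})-E_n(u^{n+1}) = \frac1\varepsilon\bigl(F(u^{n+1})-F(u^n)+f(u^n)(u^{n+1}-u^n),1\bigr).
\]
Applying Lemma~\ref{lem:scalar_lipschitz} pointwise with $u=u^n$ and $v=u^{n+1}$ bounds this by $\frac{C_L}{2\varepsilon}\norm{u^{n+1}-u^n}^2$. Adding Steps 1--3 yields \eqref{ieq:aim}.

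\textbf{Main obstacle.} The only delicate point is justifying the differentiation in Step 1. One needs $w^n$ regular enough that $s\mapsto(\hL w^n,w^n)$ is absolutely continuous with derivative $2(\hL w^n,\partial_s w^n)$. Note that $P$ is a polynomial in $s$ with values in $L^2$. If $u^n$ is only in $L^2$, I would first argue in a spectral Galerkin truncation, using the compact resolvent of $\hL$, or for smooth data, and then pass to the limit. Alternatively, I would handle the quadratic part directly via the mild solution formula.
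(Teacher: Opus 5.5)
Your proof is correct and is essentially the paper's argument. The paper differentiates $E(w^n(s))$ and splits $P-g(w^n)=(P-g(u^n))-(g(w^n)-g(u^n))$; the time integral of the last term is exactly your linearization error $E(u^{n+1})-E_n(u^{n+1})$, after which both proofs use the same Young, Cauchy--Schwarz-in-time, and Lemma~\ref{lem:scalar_lipschitz} estimates. Packaging it through $E_n$ only has the minor advantage that the nonlinear part is evaluated at the endpoints, so no chain rule for $s\mapsto(F(w^n(s)),1)$ is needed, and your regularity remark addresses a point the paper treats formally.
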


\begin{proof}
    By \eqref{eq:dEwdt}, the energy difference between $u^n$ and $u^{n+1}$ is given by
    \begin{equation}\label{eq:E_1-E_0}
        \begin{aligned}
            E(u^{n+1}) - E(u^{n})
             & = -\int_{0}^{\tau} \norm{\partial_s w^{n}}^2  \,\dd s + \int_{0}^{\tau}  \left( P(s) - g(w^{n}), \partial_s w^{n}\right) \,\dd s \\
             &= -\int_{0}^{\tau} \norm{\partial_s w^{n}}^2  \,\dd s + \int_{0}^{\tau}  \left( P(s) - g(u^{n}), \partial_s w^{n}\right) \,\dd s \\
             & \quad - \int_{0}^{\tau}  \left( g(w^{n}) - g(u^{n}), \partial_s w^{n}\right) \,\dd s. 
        \end{aligned}
    \end{equation}

    For any fixed $\bm{x} \in \Omega$, using \eqref{Lipschitz_cond} in Lemma~\ref{lem:scalar_lipschitz} with $u = u^n$ and $v = u^{n+1}$, we have
    \begin{equation}\label{ieq:E_1-E_0_third_1}
        \begin{aligned}
            \int_{0}^{\tau} \left[ f(w^{n}(s)) - f(u^{n}) \right]   \partial_s w^{n} \,\dd s
             & = F(u^{n}) - F(u^{n+1}) - f(u^{n}) (u^{n+1}-u^{n}) \\
             & \ge  -\frac{C_L}{2}\left(u^{n+1}-u^{n}\right)^{2},
        \end{aligned}
    \end{equation}
    where we have used the fact that $w^{n}(\tau)=u^{n+1}$ by \eqref{eq:ETDRK_solution} and $w^{n}(0)=u^{n}$. 
    The inequality \eqref{ieq:E_1-E_0_third_1} yields the bound for the third term in \eqref{eq:E_1-E_0}:
    \begin{equation}\label{ieq:E_1-E_0_third}
        \begin{aligned}
             -\int_{0}^{\tau}  \left( g(w^{n}) - g(u^{n}), \partial_s w^{n}\right) \,\dd s                     
             \le  \frac{C_L}{2\varepsilon} \norm{u^{n+1}-u^{n}}^2 .
        \end{aligned}
    \end{equation}
For the second term in \eqref{eq:E_1-E_0}, we have
    \begin{equation}\label{ieq:E_1-E_0_second}
      \begin{aligned}
           \int_{0}^{\tau}  \left( P(s) - g(u^{n}), \partial_s w^{n}\right) \,\dd s          
           \le \frac{1}{2} \left(  \int_{0}^{\tau} \norm{P(s)-g(u^n)}^2 \,\dd s + \int_{0}^{\tau} \norm{\partial_s w^{n}}^2 \,\dd s \right).
      \end{aligned}
  \end{equation}

    Next, we estimate $ \int_{0}^{\tau} \norm{\partial_s w^{n}}^2 \,\dd s$ using the Cauchy--Schwarz inequality:
    \[
        \tau^{\frac{1}{2}}\left(\int_{0}^{\tau} \norm{\partial_s w^{n}}^2 \,\dd s\right)^{\frac{1}{2}} \geq \int_{0}^{\tau} \norm{\partial_s w^{n}} \,\dd s \geq \norm{\int_{0}^{\tau} {\partial_s w^{n}} \,\dd s} = \norm{u^{n+1} - u^{n}},
    \]
    which gives
    \begin{equation}\label{ieq:E_1-E_0_first}
        \int_{0}^{\tau} \norm{\partial_s w^{n}}^2  \,\dd s \geq \frac{1}{\tau} \norm{u^{n+1} - u^{n}}^2.
    \end{equation} 

    Finally, substituting \eqref{ieq:E_1-E_0_third}, \eqref{ieq:E_1-E_0_second} and \eqref{ieq:E_1-E_0_first} back into \eqref{eq:E_1-E_0} yields
    \begin{equation*}
        E(u^{n+1}) - E(u^{n}) \le \left(-\frac{1}{2\tau} + \frac{C_L}{2\varepsilon}\right)  \norm{u^{n+1}-u^{n}}^2 + \frac{1}{2} \int_{0}^{\tau} \norm{P(s)-g(u^n)}^2 \,\dd s.
    \end{equation*}
    This completes the proof.
\end{proof}

From \eqref{ieq:aim}, preserving energy dissipation reduces to proving the following inequality:
\begin{equation*}
    \int_{0}^{\tau} \norm{P(s)-g(u^n)}^2 \,\dd s \le \left(\frac{1}{\tau}-\frac{C_L}{\varepsilon}\right) \norm{u^{n+1}-u^{n}}^{2},
\end{equation*}
under some mild time step-size restrictions.
Since all $p_j$ sum up to one (by condition \eqref{sum_pj}), we have the decomposition
\begin{equation*}
    P(s) - g(u^n) = \sum_{j=1}^{r} p_{j}\left(\frac{s}{\tau}\right) \left[g(v_j)-g(u^n)\right].
\end{equation*}
Applying the Lipschitz condition \eqref{Lipschitz_cond} for $g$ yields
\begin{equation}\label{ieq:norm_Pi_m_g}
    \begin{aligned}
        \left\| P(s) -g(u^n) \right\|
         & \le \sum_{j=1}^{r}\left\vert p_{j}\left(\frac{s}{\tau}\right)  \right\vert  \left\| g(v_j)-g(u^n) \right\|                    \\
         & \le \frac{C_L}{\varepsilon} \sum_{j=1}^{r}\left\vert p_{j}\left(\frac{s}{\tau}\right)  \right\vert  \left\| v_j-u^n \right\|.
    \end{aligned}
\end{equation}
Thus, to control $\norm{P(s)-g(u^n)}$ by $\norm{u^{n+1}-u^{n}}$, it suffices to control each $\norm{v_j-u^n}$ by $\norm{u^{n+1}-u^n}$ for $1 \le j \le r$. 

\begin{lemma}\label{lem:norm_Phi_c}
  For any $c \in [0,1]$, define
  \begin{equation}\label{def_Phi_c}
    \Phi_c \coloneqq c\tau\,\varphi_1(c\tau \varepsilon \hL)\, ( \varepsilon \hL u^n + g(u^n) ) = (e^{c\tau \varepsilon \hL} -1)\, u^n + c\tau\,\varphi_1(c\tau \varepsilon \hL)\,g(u^n) ,
  \end{equation}
  with $\varphi_1$ as in \eqref{def:phi_k}.
  Then
  \begin{equation*}
      \norm{\Phi_c} \le \norm{\Phi_1}.
  \end{equation*}
\end{lemma}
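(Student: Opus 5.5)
Write $y\coloneqq \varepsilon\hL u^n + g(u^n)$, a fixed element of $L^2(\Omega)$ that does not depend on $c$. By \eqref{def_Phi_c} we have $\Phi_c = c\tau\,\varphi_1(c\tau\varepsilon\hL)\,y = \psi_c(\hL)\,y$, where
\[
  \psi_c(\lambda) \coloneqq c\tau\,\varphi_1(c\tau\varepsilon\lambda) = \int_0^{c\tau} \e^{\sigma\varepsilon\lambda}\,\dd\sigma ,\qquad \lambda\le 0 .
\]
The plan is to diagonalize $\hL$ and reduce the claim to a scalar monotonicity statement in $c$. Since $\hL$ is self-adjoint, negative semidefinite, and has compact resolvent, it has an orthonormal eigenbasis $\{e_k\}$ of $L^2(\Omega)$ with eigenvalues $\lambda_k\le 0$. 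Expanding $y=\sum_k y_k e_k$ and using the functional calculus gives
\[
  \norm{\Phi_c}^2 = \sum_k \abs{\psi_c(\lambda_k)}^2 \abs{y_k}^2 .
\]

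Next I would show that, for each fixed $\lambda\le 0$, the map $c\mapsto \psi_c(\lambda)$ is nonnegative and nondecreasing on $[0,1]$. This is immediate from the integral representation: the integrand $\e^{\sigma\varepsilon\lambda}$ is positive, so the integral over $[0,c\tau]$ is nonnegative and grows with the upper limit $c\tau$. Hence $0\le \psi_c(\lambda_k)\le \psi_1(\lambda_k)$ for every $k$ and every $c\in[0,1]$. The case $\lambda_k=0$ needs no separate treatment, since then $\psi_c(0)=c\tau$.

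Plugging this termwise bound into the eigen-expansion gives $\norm{\Phi_c}^2\le\norm{\Phi_1}^2$, which is the claim. The only step that needs any care is the spectral reduction. If one prefers not to use the eigenbasis, the same argument works with the spectral theorem for the self-adjoint operator $\hL$, since the integrand $\abs{\psi_c(\lambda)}^2$ is pointwise dominated by $\abs{\psi_1(\lambda)}^2$ on $(-\infty,0]$. The degenerate cases $\tau=0$ or $\varepsilon=0$ are trivial: there $\psi_c=c\tau$, so $\norm{\Phi_c}=c\tau\norm{y}\le\tau\norm{y}=\norm{\Phi_1}$.
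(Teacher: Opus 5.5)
Your proposal is correct and follows essentially the same route as the paper: both expand in the eigenbasis of $\hL$ and reduce the claim to the monotonicity in $c$ of the scalar multiplier, then sum termwise. The only difference is that you get the monotonicity from the representation $\int_0^{c\tau}\e^{\sigma\varepsilon\lambda}\,\dd\sigma$, whereas the paper differentiates $(\e^{c\tau\mu}-1)^2/\mu^2$ in $c$; your version is slightly cleaner and needs no separate treatment of $\mu=0$.
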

\begin{proof} 
  We show that $\norm{\Phi_c}$ is monotonically increasing in $c$. If $c=0$, then $\Phi_0=0$, so $\norm{\Phi_0}=0\le\norm{\Phi_1}$. Hence we assume \(c\in(0,1]\). Define
  \[
      h(c,\mu)\coloneqq \bigl(c\tau\,\varphi_1(c\tau\mu)\bigr)^2,
  \]
  which is well-defined at \(\mu=0\) since \(\varphi_1(0)=1\).
  For fixed \(\mu\in\RR\), \(h(c,\mu)\) is increasing in \(c\). To see this, if \(\mu\neq0\), then \(h(c,\mu)=(\e^{c\tau\mu}-1)^2/\mu^2\) and
  \[
      \partial_c h(c,\mu)=\frac{2\tau(\e^{c\tau\mu}-1)\e^{c\tau\mu}}{\mu}\ge0,
  \]
  because \((\e^{c\tau\mu}-1)/\mu\ge0\). If \(\mu=0\), \(h(c,0)=c^2\tau^2\) is clearly increasing.

  Let \((\mu_j,\phi_j)_{j=1}^\infty\) denote the eigenpairs of \(\varepsilon\hL\). By the compact-resolvent assumption, \(\varepsilon\hL\) has a discrete spectrum \(\{\mu_j\}\subset(-\infty,0]\) and \(\{\phi_j\}\) forms a complete orthonormal basis of \(L^2(\Omega)\). Then, for any \(v\in L^2(\Omega)\),
  \begin{equation*}
      \begin{aligned}
          \left\|c\tau\,\varphi_1(c\tau\varepsilon \hL)\, v\right\|^2
           & = \sum_{j=1}^\infty h(c,\mu_j)\,|(v,\phi_j)|^2       \\
           & \le \sum_{j=1}^\infty h(1,\mu_j)\,|(v,\phi_j)|^2      \\
           & = \norm{\tau\,\varphi_1(\tau\varepsilon \hL)\, v}^2.
      \end{aligned}
  \end{equation*}
  Taking \(v=\varepsilon \hL u^n+g(u^n)\) yields the desired inequality.
\end{proof}

\begin{lemma}\label{lem:Ci} 
  Assume \(f\) is globally Lipschitz with constant \(C_L\).
  Consider the ETDRK scheme \eqref{eq:ETDRK} under assumptions 
  \eqref{assump:ci}--\eqref{assump:ab-finite}. 
  Define \(\{C_i(x)\}_{i=1}^r\) recursively by
  \begin{equation}\label{def_Ci}
      C_1(x) \coloneqq 0, \qquad  
      C_i(x) := 1 + x
      \sum_{j=1}^{i-1} \alpha_{ij} C_j(x), 
      \quad 2 \le i \le r,
      \quad x \ge 0.
  \end{equation}
  where \(\alpha_{ij}\) is defined in \eqref{def_alpha_beta}.
  Then, 
  \begin{equation}\label{ieq:Ci}
      \norm{v_i - u^{n}} 
      \le C_i\left(\frac{C_L\tau}{\varepsilon}\right)\norm{\Phi_1}, \quad 1 \le i \le r,
  \end{equation}
 and
  \begin{equation}\label{ieq:D_r1}
      \norm{u^{n+1} - u^{n}}
      \ge \left[ 1 - \frac{C_L\tau}{\varepsilon}
      \sum_{i=1}^{r} \beta_i C_i \left(\frac{C_L\tau }{\varepsilon}\right) \right] \norm{\Phi_1},
  \end{equation}
  where $v_i$ is the $i$-th stage value of the ETDRK scheme \eqref{eq:ETDRK}, \(\beta_i\) is defined in \eqref{def_alpha_beta}, and $\Phi_1$ is defined in \eqref{def_Phi_c} with $c=1$.
  \end{lemma}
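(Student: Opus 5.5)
The plan is to rewrite each stage and the update as a perturbation of the \(\Phi_c\) from Lemma~\ref{lem:norm_Phi_c}, using the equilibrium-preserving conditions \eqref{cond_equilibria_bj}--\eqref{cond_equilibria_aij}, and then bound the perturbation by the Lipschitz constant together with an induction on the stage index. Throughout, write \(x \coloneqq C_L\tau/\varepsilon\).

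\textbf{Rewriting the stages.} For \(i\ge 2\), subtract \(u^n\) from \eqref{eq:ETDRK_stage}. By \eqref{cond_equilibria_aij},
\[
\tau\sum_{j=1}^{i-1} a_{ij}(\tau\varepsilon\hL)\, g(u^n) = c_i\tau\,\varphi_1(c_i\tau\varepsilon\hL)\, g(u^n).
\]
This gives the identity
\[
v_i - u^n = \Phi_{c_i} + \tau\sum_{j=1}^{i-1} a_{ij}(\tau\varepsilon\hL)\bigl(g(v_j)-g(u^n)\bigr).
\]
In the same way, \eqref{eq:ETDRK_update} together with \eqref{cond_equilibria_bj} gives
\[
u^{n+1}-u^n = \Phi_1 + \tau\sum_{j=1}^{r} b_j(\tau\varepsilon\hL)\bigl(g(v_j)-g(u^n)\bigr).
\]
These identities rest only on the summation conditions, which hold for all \(z\) and hence as operator identities via the spectral calculus.

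\textbf{Induction for \eqref{ieq:Ci}.} For \(i=1\) the bound is trivial, since \(v_1=u^n\) and \(C_1=0\). For \(i\ge2\), assume the bound holds for all \(j<i\). By assumption \eqref{assump:ci} we have \(c_i\in[0,1]\), so Lemma~\ref{lem:norm_Phi_c} gives \(\norm{\Phi_{c_i}}\le\norm{\Phi_1}\). Using \(\vertiii{a_{ij}}\le\alpha_{ij}\) and \(\norm{g(v_j)-g(u^n)}\le (C_L/\varepsilon)\norm{v_j-u^n}\), we obtain
\[
\norm{v_i-u^n}\le \norm{\Phi_1} + x\sum_{j<i}\alpha_{ij}C_j(x)\norm{\Phi_1} = C_i(x)\norm{\Phi_1},
\]
which is exactly the recursion \eqref{def_Ci}.

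\textbf{Lower bound \eqref{ieq:D_r1}.} Apply the reverse triangle inequality to the update identity:
\[
\norm{u^{n+1}-u^n}\ge \norm{\Phi_1} - \tau\sum_{j}\beta_j\frac{C_L}{\varepsilon}C_j(x)\norm{\Phi_1}.
\]
This is \eqref{ieq:D_r1}.

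The main point needing care is the algebraic identity that extracts \(\Phi_{c_i}\). One has to check that \(\e^{c_i\tau\varepsilon\hL}u^n - u^n\) combines with the \(g(u^n)\) term to give exactly the second expression in \eqref{def_Phi_c}, including the case \(c_i=0\), where \(\Phi_0=0\). Everything else is routine.
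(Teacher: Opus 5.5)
Your proposal is correct and follows the paper's proof essentially step for step. You use the same rewriting of $v_i-u^n$ and $u^{n+1}-u^n$ as $\Phi_{c_i}$ (resp.\ $\Phi_1$) plus Lipschitz perturbation terms, the same induction via Lemma~\ref{lem:norm_Phi_c}, and the same reverse triangle inequality. Like the paper, you rely on the equilibrium conditions \eqref{cond_equilibria_bj}--\eqref{cond_equilibria_aij}; these are not among the lemma's stated hypotheses, but they are genuinely needed and are assumed in Theorem~\ref{main_thm}.
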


\begin{proof}
    For \(i=2,\dots,r\), by \eqref{eq:ETDRK_stage} and \eqref{cond_equilibria_aij}, we have
    \begin{equation}\label{eq:vi_un_rewrite}
      \begin{aligned}
          v_i - u^{n}
          & = (\e^{c_i\tau\varepsilon\hL} - \hI) u^n+\tau \sum_{j=1}^{i-1} a_{i j}(\tau\varepsilon\hL) \left(g(v_j)-g(u^n)\right) + c_i\tau \varphi_1(c_i\tau \varepsilon\hL) g(u^n) \\
          & = \Phi_{c_i} + \tau \sum_{j=1}^{i-1} a_{i j}(\tau\varepsilon\hL) \left(g(v_j)-g(u^n)\right).
      \end{aligned}
    \end{equation}
    Similarly, by \eqref{eq:ETDRK_update} and \eqref{cond_equilibria_bj}, we have
    \begin{equation}\label{eq:unplus1_un_rewrite}
      u^{n+1} - u^{n}
      = \Phi_1 +\tau\sum_{j=1}^r b_j(\tau\varepsilon\hL) \left( g(v_j)-g(u^n) \right).
    \end{equation}

    By \eqref{eq:ETDRK_v1}, $v_1=u^{n}$, so the statement holds trivially for $i=1$. For $2 \le i \le r$, taking the $L^{2}$-norm in \eqref{eq:vi_un_rewrite}, we have
    \begin{align*}
            \norm{v_i - u^{n}}
             & \leq \norm{\Phi_{c_i}} + \tau \sum_{j=1}^{i-1} \vertiii{a_{i j}(\tau\varepsilon\hL)} \norm{g(v_j)-g(u^n)} \\
             & \leq \norm{\Phi_1} + \frac{C_L\tau}{\varepsilon} \sum_{j=1}^{i-1} \alpha_{ij} \norm{v_j - u^{n}},          \\
    \end{align*}
    where we have applied Lemma~\ref{lem:norm_Phi_c} and the assumption $c_i\in[0,1]$ in the last step.
    The inequality \eqref{ieq:Ci} then follows from a straightforward induction.

    Next, we estimate $\norm{u^{n+1}-u^{n}}$. Using \eqref{eq:unplus1_un_rewrite}, we have
    \begin{align*}
            \norm{u^{n+1} - u^{n}}
             & =\left\| \Phi_1 +\tau\sum_{j=1}^r b_j(\tau\varepsilon\hL) ( g(v_j)-g(u^n) ) \right\|  \\
             & \geq \norm{\Phi_1} - \tau \sum_{j=1}^r \vertiii{b_j(\tau\varepsilon\hL)} \norm{g(v_j) - g(u^{n})} \\
             & \geq \norm{\Phi_1} - \tau \frac{C_L}{\varepsilon} \sum_{j=1}^r \beta_j \norm{v_j - u^{n}}   \\
             & \geq \left[1-\frac{C_L\tau}{\varepsilon} \sum_{j=1}^{r} C_j\!\left(\tau \frac{C_L}{\varepsilon}\right) \beta_j \right] \norm{\Phi_1}.
    \end{align*}
    This completes the proof.
\end{proof}

\begin{lemma}\label{lem:explicit_lb}
For \(x\in[0,1]\), define
\begin{equation}\label{def_I}
    I(x) \coloneqq \left[\int_{0}^{1}\left(\sum_{i=1}^{r} C_i(x)\abs{p_i(\theta)}\right)^2\,\dd\theta\right]^{\frac12},
\end{equation} and
\begin{equation*}
    S(x) \coloneqq \sum_{i=1}^{r} C_i(x)\beta_i,
\end{equation*}
where \(C_i(x)\) refers to the functions defined in \eqref{def_Ci}, \(p_i(\theta)\) is defined in \eqref{eq:b_j_z}, and \(\beta_i\) is defined in \eqref{def_alpha_beta}.
For \(x\in[0,1)\) and $r\geq 2$, define
\begin{equation}\label{def_H}
    H(x) \coloneqq \frac{x}{\sqrt{1-x}}I(x)+xS(x)-1.
\end{equation}
The function \(H\) admits at least one positive root. Define
\begin{equation}\label{def_x0}
    x_0 \coloneqq \min\{x>0:\,H(x)=0\}.
\end{equation}
Then $x_0$ depends only on the ETDRK method coefficients, and satisfies
\begin{equation*}
    x_0 \ge \min\!\left\{\frac{1}{3I(1)},\frac34\right\}.
\end{equation*}
Moreover, $H(x)\le0$ for all $x\in[0,x_0]$.
\end{lemma}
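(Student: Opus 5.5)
Everything rests on monotonicity and continuity of the building blocks on $[0,1)$. By the recursion \eqref{def_Ci} and $\alpha_{ij}\ge0$, each $C_i(x)$ is a polynomial in $x$ with nonnegative coefficients, and $C_i(x)\ge 1$ for $i\ge2$. Hence $I(x)$ and $S(x)$ are continuous and nondecreasing on $[0,1]$; in particular $I(x)\le I(1)$ there. The factor $x/\sqrt{1-x}$ is continuous, increasing, vanishes at $0$, and blows up as $x\to1^-$. Therefore $H$ is continuous and strictly increasing on $[0,1)$ wherever $I$ or $S$ is positive, with $H(0)=-1$.

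\textbf{Existence of a root.} We need $H(x)\to+\infty$ as $x\to1^-$, and this requires $I(1)>0$. Since $\sum_i p_i\equiv1$ by \eqref{sum_pj}, not all $p_i$ vanish. If some $p_i\not\equiv0$ with $i\ge2$, then $C_i\ge1$ gives $I(x)>0$ for all $x$. The only remaining case is that $p_1\equiv1$ and all other $p_i\equiv0$; here $C_1=0$ makes $I\equiv0$, which is presumably why $r\ge2$ is imposed. In that degenerate case I would argue directly, or note that $H\equiv -1$ with no root, so the statement must implicitly exclude it. I would flag this and assume $I(1)>0$, e.g.\ that some $p_i$ with $i\ge2$ is nonzero. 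Granting $I(1)>0$, we get $I(x)\ge I(0)>0$ near $1$, so $H(x)\to+\infty$. The intermediate value theorem then gives a root in $(0,1)$. The zero set is closed and bounded away from $0$ because $H(0)=-1$, so the minimum $x_0$ is attained. It depends only on $\alpha_{ij}$, $\beta_i$ and $p_i$, i.e.\ only on the method. Since $H(0)<0$ and $x_0$ is the first zero, continuity gives $H\le0$ on $[0,x_0]$.

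\textbf{Lower bound.} Set $m=\min\{1/(3I(1)),3/4\}$. It suffices to show $H(x)<0$ for all $x\in(0,m)$, or $H\le 0$ on $[0,m]$ with any root there being consistent; strict negativity on $[0,m)$ gives $x_0\ge m$.

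For $x\le 3/4$ we have $\sqrt{1-x}\ge 1/2$, so
\[
\frac{x}{\sqrt{1-x}}\,I(x)\le 2xI(1)\le \tfrac23 \quad\text{when } x\le \tfrac{1}{3I(1)}.
\]
The term $xS(x)$ must then be bounded by something strictly less than $1/3$. This is the main obstacle, because $S$ involves the $\beta_i$, which are unrelated to $I$. I would try to bound $S(x)\le I(x)$ via $\beta_i\le\int_0^1\abs{p_i}\,\dd\theta$. This follows from \eqref{eq:b_j_z}: $\vertiii{b_i(z)}\le\sup_{\mu\le0}\int_0^1 \e^{(1-\theta)\mu}\abs{p_i}\,\dd\theta=\int_0^1\abs{p_i}\,\dd\theta$. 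Then
\[
S(x)\le\int_0^1\sum_i C_i(x)\abs{p_i}\,\dd\theta\le I(x)
\]
by Cauchy--Schwarz on $[0,1]$. Consequently $xS(x)\le xI(1)\le 1/3$, and
\[
H(x)\le \tfrac23+\tfrac13-1=0,
\]
with strict inequality for $x<m$ since the bounds on $x$ are strict. This yields $x_0\ge m$.
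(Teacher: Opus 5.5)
Your proof is correct and follows essentially the same route as the paper. Monotonicity of the $C_i$ gives $I(x)\le I(1)$, the bound $\beta_i\le\int_0^1\abs{p_i(\theta)}\,\dd\theta$ plus Cauchy--Schwarz gives $S\le I$, hence $H(x)\le 3xI(1)-1$ for $x\le 3/4$, and existence of $x_0$ follows from $H(0)=-1$, $H\to+\infty$, and the intermediate value theorem. Your caveat about the degenerate case $p_i\equiv 0$ for all $i\ge 2$, where $H\equiv -1$ and no root exists, is well taken, since the paper simply asserts $I(1)>0$ without justification. Your handling of the attainment of the minimum and of strict negativity on $[0,m)$ is also more careful than the paper's.
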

\begin{proof}
By \eqref{def_Ci}, for \(i=1\) we have \(C_1(x)=0=C_1(1)\). For \(i\ge2\) and every \(x\in[0,1]\),
\[
    C_i(x)=1+x\sum_{j=1}^{i-1}\alpha_{ij}C_j(x)\le 1+\sum_{j=1}^{i-1}\alpha_{ij}C_j(x),
\]
so an induction on \(i\) gives
\begin{equation*}
    C_i(x)\le C_i(1),\qquad i=1,\dots,r,\ \ x\in[0,1].
\end{equation*}
Hence \(I(x)\le I(1)\) and \(S(x)\le S(1)\), and therefore
\begin{equation}\label{ieq:H_prebound}
    H(x)\le \frac{x}{\sqrt{1-x}}I(1)+xS(1)-1,\qquad x\in[0,1).
\end{equation}

Next, we prove \(S(1)\le I(1)\). By \eqref{def_alpha_beta} and \eqref{ieq:pi_opnorm}, we have
\[
    \beta_i\le \int_0^1 \abs{p_i(\theta)}\,\dd\theta.
\]
Thus
\begin{equation}\label{ieq:S_le_I}
\begin{aligned}
    S(1)
    &= \sum_{i=1}^{r} C_i(1)\beta_i
    \le \sum_{i=1}^{r} C_i(1)\int_0^1\abs{p_i(\theta)}\,\dd\theta \\
    &= \int_0^1\sum_{i=1}^{r} C_i(1)\abs{p_i(\theta)}\,\dd\theta
    \le \left[\int_0^1\left(\sum_{i=1}^{r} C_i(1)\abs{p_i(\theta)}\right)^2\,\dd\theta\right]^{\frac12}
    = I(1).
\end{aligned}
\end{equation}
Combining \eqref{ieq:H_prebound} and \eqref{ieq:S_le_I}, we obtain
\begin{equation*}
    H(x)\le x\left(\frac{1}{\sqrt{1-x}}+1\right)I(1)-1.
\end{equation*}
If \(x\le 3/4\), then \(\frac{1}{\sqrt{1-x}}\le 2\), so
\[
    H(x)\le 3xI(1)-1.
\]
Therefore \(H(x)\le0\) whenever
\[
    0\le x\le \min\!\left\{\frac{1}{3I(1)},\frac34\right\}.
\]
In particular, \(H(0)=-1<0\), while \(\lim_{x\to1^-}H(x)=+\infty\) since $I(1)>0$. By continuity, the set
\[
    \{x>0:\,H(x)=0\}
\]
is nonempty, so \(x_0\) is well-defined. 
Moreover, by \eqref{def_I}--\eqref{def_H} together with \eqref{def_Ci}, \eqref{def_alpha_beta}, and \eqref{eq:b_j_z}, the function \(H\) depends only on the ETDRK coefficients \(\alpha_{ij}\), \(\beta_i\), and \(\{p_i\}\). Thus, \(x_0\) only depends on the ETDRK method coefficients.
\end{proof}
It is well-known that the ETDRK1 method preserves unconditionally the original energy dissipation law (see for example \cite{Fu2022JCP_ED4ETD2}). In the following result, we only consider the case of $r\geq 2$.

\begin{theorem}\label{main_thm}
  Assume \(f\) is globally Lipschitz with constant \(C_L\).
  Let \(\{u^{n}\}_{n\ge 0}\) be the numerical solution to \eqref{eq:L2-gradient-flow} obtained from the $r$-stage ETDRK scheme \eqref{eq:ETDRK} with $r\geq 2$ under assumptions \eqref{assump:ci}--\eqref{assump:ab-finite} and \eqref{cond_equilibria_bj}--\eqref{cond_equilibria_aij}.
  Then
  \[
      E(u^{n+1}) \le E(u^{n}),
  \]
  under the step-size restriction
  \[
      \tau \le \tau_{\max} \coloneqq \frac{\varepsilon}{C_L}x_0,
  \]
  where \(x_0\) is defined in \eqref{def_x0}, and hence depends only on the ETDRK method coefficients and is bounded away from zero.
\end{theorem}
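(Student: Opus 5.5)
Set $x\coloneqq C_L\tau/\varepsilon\in(0,x_0]$. The plan is to combine Lemma~\ref{lemma:energy_decay} with the stage bounds of Lemma~\ref{lem:Ci}, so that the dissipation condition reduces to the sign condition $H(x)\le0$ from Lemma~\ref{lem:explicit_lb}. First note that $x_0<1$, because $H(x)\to+\infty$ as $x\to1^-$. Hence $x<1$, so the factor $\frac1\tau-\frac{C_L}{\varepsilon}=\frac{1-x}{\tau}$ is positive. By Lemma~\ref{lemma:energy_decay}, it therefore suffices to show
\[
\int_0^\tau\norm{P(s)-g(u^n)}^2\,\dd s\le \frac{1-x}{\tau}\norm{u^{n+1}-u^n}^2 .
\]

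For the left-hand side, I would use \eqref{ieq:norm_Pi_m_g} together with \eqref{ieq:Ci}:
\[
\norm{P(s)-g(u^n)}\le \frac{C_L}{\varepsilon}\sum_{i=1}^r\Big|p_i\big(\tfrac s\tau\big)\Big|\,C_i(x)\norm{\Phi_1}.
\]
Squaring, integrating over $[0,\tau]$ and substituting $s=\tau\theta$ gives
\[
\int_0^\tau\norm{P(s)-g(u^n)}^2\,\dd s\le \frac{C_L^2\tau}{\varepsilon^2}I(x)^2\norm{\Phi_1}^2=\frac{x^2}{\tau}I(x)^2\norm{\Phi_1}^2 .
\]

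For the right-hand side, \eqref{ieq:D_r1} gives $\norm{u^{n+1}-u^n}\ge(1-xS(x))\norm{\Phi_1}$. The condition $H(x)\le0$ on $[0,x_0]$ (Lemma~\ref{lem:explicit_lb}) says exactly that
\[
\frac{x}{\sqrt{1-x}}I(x)\le 1-xS(x).
\]
Since the left side of this inequality is nonnegative, it also shows $1-xS(x)\ge0$, so we may square both sides. Multiplying by $\frac{1-x}{\tau}\norm{\Phi_1}^2$ then yields
\[
\frac{x^2}{\tau}I(x)^2\norm{\Phi_1}^2\le\frac{1-x}{\tau}(1-xS(x))^2\norm{\Phi_1}^2\le\frac{1-x}{\tau}\norm{u^{n+1}-u^n}^2 ,
\]
which is the required inequality. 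This closes the proof. The only care needed is this sign check before squaring and the case $\tau=0$, which is trivial. The fact that $x_0$ is bounded away from zero is already stated in Lemma~\ref{lem:explicit_lb}.

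I expect no real obstacle here, since all the work is done in the preceding lemmas. The step to watch is keeping the powers of $\tau$ consistent. The left-hand bound carries $1/\tau$ because of $(C_L/\varepsilon)^2\tau=x^2/\tau$, and this must match the $1/\tau$ on the right for the $H$-criterion to emerge exactly.
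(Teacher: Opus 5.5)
Your proof is correct and follows the paper's own argument. You reduce via Lemma~\ref{lemma:energy_decay}, bound $\int_0^\tau\norm{P(s)-g(u^n)}^2\,\dd s$ using \eqref{ieq:norm_Pi_m_g} and \eqref{ieq:Ci}, bound $\norm{u^{n+1}-u^n}$ from below by \eqref{ieq:D_r1}, and conclude from $H(x)\le 0$ on $[0,x_0]$. The only cosmetic difference is that the paper compares the square-rooted form $\tau^{1/2}(\int_0^\tau\norm{P(s)-g(u^n)}^2\,\dd s)^{1/2}\le(1-x)^{1/2}\norm{u^{n+1}-u^n}$, so it needs no sign check, whereas you work with squares and correctly verify $1-xS(x)\ge 0$ before squaring.
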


\begin{proof} 
    By Lemma~\ref{lemma:energy_decay}, energy dissipation holds if $\tau<{\varepsilon}/{C_L}$ and
    \begin{equation}\label{aim_ieq}
        \tau^{\frac{1}{2}}\left(\int_{0}^{\tau} \|P(s)-g(u^n)\|^2 \,\dd s\right)^{\frac{1}{2}}  
        \le 
        \left(1-\frac{C_L\tau}{\varepsilon}\right)^{\frac{1}{2}}
        \norm{u^{n+1}-u^n}.
    \end{equation}
    Combining \eqref{ieq:norm_Pi_m_g}, \eqref{ieq:Ci}, and \eqref{ieq:D_r1}, we bound both sides of \eqref{aim_ieq} as follows:
    \begin{equation*}
        \begin{aligned}
            \text{LHS} 
            & \le 
            \frac{C_L\tau}{\varepsilon}  \left[\int_{0}^{1} \left(\sum_{i=1}^{r}C_i\!\left(\frac{C_L\tau}{\varepsilon}\right)| p_{i}(\theta) |   \right)^{2}  \,\dd \theta \right]^{\frac{1}{2}} \norm{\Phi_1} \\
            & = \frac{C_L\tau}{\varepsilon} I \left(\frac{C_L \tau}{\varepsilon}\right) \norm{\Phi_1}, \\
            \text{RHS} 
            & \ge 
            \left(1-\frac{C_L\tau}{\varepsilon}\right)^{\frac{1}{2}} \left( 1 - \frac{C_L\tau}{\varepsilon} \sum_{i=1}^{r} C_i\left(\frac{C_L\tau}{\varepsilon}\right) \beta_i \right) \norm{\Phi_1}\\ 
            &= \left(1-\frac{C_L\tau}{\varepsilon}\right)^{\frac{1}{2}} \left( 1 - \frac{C_L\tau}{\varepsilon} S \left(\frac{C_L \tau}{\varepsilon}\right) \right) \norm{\Phi_1},
        \end{aligned}
    \end{equation*}
    where \(I(x)\) and \(S(x)\) are defined in Lemma~\ref{lem:explicit_lb}.
    Therefore, $E(u^{n+1})\le E(u^n)$ holds provided that \(H\!\left(\frac{C_L\tau}{\varepsilon}\right)\le 0\), where \(H\) is defined in \eqref{def_H}.
By Lemma~\ref{lem:explicit_lb}, \(H(x)\le0\) for all \(x\in[0,x_0]\). Setting \(x=\frac{C_L\tau}{\varepsilon}\), we obtain that
\[
   \tau \le \frac{\varepsilon}{C_L}x_0
\]
implies $E(u^{n+1})\le E(u^n)$.
\end{proof}

\begin{remark}\label{rem:explicit_step_from_x0_lb}
  Lemma~\ref{lem:explicit_lb} ensures that \(x_0\) is bounded away from zero:
  \[
      x_0 \ge 
      \min\left\{
          \frac{1}{3I(1)},
          \frac{3}{4}
      \right\}
      > 0.
  \]
  Substituting this lower bound into Theorem~\ref{main_thm} yields the following explicit sufficient step-size condition for energy dissipation:
  \begin{equation*}
      \tau \le 
      \frac{\varepsilon}{C_L}
      \min\left\{
          \frac{1}{3 I(1)},
          \frac{3}{4}
      \right\},
  \end{equation*}
  where \(I(x)\) is defined in \eqref{def_I}.
  \end{remark}

  \begin{remark}\label{rem:fully_discrete}
    For a fully discrete scheme, $\hL$ is replaced by a spatially discretized operator $\hL_h$.
    If $\hL_h$ is still self-adjoint and negative semidefinite in the discrete $L^2$ space, then $\{e^{t\varepsilon\hL_h}\}$ is a contraction semigroup with $\|e^{t\varepsilon\hL_h}\|\le 1$.
    Consequently, the energy dissipation result stated in Theorem~\ref{main_thm} still holds, with $\hL$ replaced by $\hL_h$ throughout the argument.
    \end{remark}

\section{Explicit step-size restriction for various ETDRK methods}\label{sec:explicit_time_step_restriction}

In this section, we compute explicit step-size restrictions for several representative methods, which are fundamentally classified by the type of order conditions \cite{Hochbruck2005SINUM_OrderCondition}.
Classical-order ETDRK schemes are derived from standard ODE Taylor expansions~\cite{Cox2002JCP_ETD}. However, Hochbruck and Ostermann demonstrated that they may suffer from severe order reduction for stiff PDEs, because classical error bounds implicitly require the time derivatives of the nonlinear source to satisfy spatial boundary conditions. To overcome this, they introduced stiff order conditions \cite[Table~2]{Hochbruck2005SINUM_OrderCondition} to achieve full-order convergence, which impose additional constraints on the method's coefficients.
Our goal here is not to compare their accuracy, but to demonstrate that our framework is suitable for both classical-order and stiff-order methods, providing energy dissipation step-size restrictions regardless of how their coefficients were derived.

The quantities \(\alpha_{ij}\) and \(\beta_i\) in \eqref{def_alpha_beta} are suprema, which may be difficult to evaluate.
The analysis of Theorem~\ref{main_thm} remains valid if they are replaced by any upper bounds \(\halp_{ij}\ge\alpha_{ij}\) and \(\hbet_i\ge\beta_i\).
In the computations below we use the integral bounds
\begin{equation}\label{ieq:pi_opnorm}
  \vertiii{\int_0^1 \e^{(1-\theta)\tau\varepsilon\hL}p_i(\theta)\,\dd\theta}
  \le
  \int_0^1|p_i(\theta)|\,\dd\theta,\qquad 1\le i\le r,
\end{equation}
and the analogous estimate for each internal coefficient $a_{ij}$.
These bounds are in general not optimal: if the polynomial kernel of \(b_i\) or \(a_{ij}\) changes sign, the integral of the absolute value may strictly exceed the corresponding operator-norm supremum.
We adopt them because they are convenient to compute and admit a uniform implementation; the corresponding unified procedure is given in Algorithm~\ref{alg:htaumax}.
The resulting step-size restriction may therefore be more conservative than the exact \(\tau_{\max}\).
We denote by \(\hxzero\) the smallest positive root of \(H\) in \eqref{def_H} evaluated with \(\halp_{ij}\) and \(\hbet_i\), and we define $\htaumax\coloneqq \varepsilon \hxzero/C_L$.
Thus the quantity computed in this section is \(\htaumax\) rather than \(\tau_{\max}\).

\begin{algorithm}[ht]
\caption{Computation of \(\htaumax\) from ETDRK coefficients}
\label{alg:htaumax}
\begin{algorithmic}[1]
\Require Butcher coefficients of an $r$-stage ETDRK scheme \eqref{eq:ETDRK}
\Ensure $\htaumax=\varepsilon\,\hxzero/C_L$
\For{$i \leftarrow 2$ to $r$} \Comment{skip $i=1$ since $C_1\equiv 0$}
    \State expand $b_i(z)=\sum_{k\ge 1}\gamma_{ik}\varphi_k(z)$
    \State $p_i(\theta)\leftarrow\sum_{k\ge 1}\gamma_{ik}\,\theta^{k-1}/(k-1)!$
    \State $\hbet_i\leftarrow\int_0^1 |p_i(\theta)|\,\dd\theta$
\EndFor
\For{$i \leftarrow 3$ to $r$}
    \For{$j \leftarrow 2$ to $i-1$} \Comment{skip column $j=1$ since $C_1\equiv 0$}
        \State expand $a_{ij}(z)=\sum_{c}\sum_{k\ge 1}\gamma_{ij,ck}\,\varphi_k(c z)$
        \State $q_{ij,c}(\theta)\leftarrow\sum_{k\ge 1}\gamma_{ij,ck}\,\theta^{k-1}/(k-1)!$
        \State $\halp_{ij}\leftarrow\sum_{c}\int_0^1 |q_{ij,c}(\theta)|\,\dd\theta$
    \EndFor
\EndFor
\State $C_2(x)\leftarrow 1$
\For{$i \leftarrow 3$ to $r$}
    \State $C_i(x)\leftarrow 1+x\sum_{j=2}^{i-1}\halp_{ij}C_j(x)$
\EndFor
\State $I(x)\leftarrow\bigl[\int_0^1\bigl(\sum_{i=2}^r C_i(x)|p_i(\theta)|\bigr)^2\,\dd\theta\bigr]^{1/2}$
\State $S(x)\leftarrow\sum_{i=2}^r C_i(x)\,\hbet_i$
\State $H(x)\leftarrow \frac{x}{\sqrt{1-x}}I(x)+x S(x)-1$
\State $\hxzero\leftarrow$ smallest root of $H(x)=0$ in $(0,1)$
\State \Return $\varepsilon\,\hxzero/C_L$
\end{algorithmic}
\end{algorithm}

\subsection{Second-order methods} \label{subsec:second_order}

We begin our analysis with second-order methods. According to the stiff order theory \cite{Hochbruck2005SINUM_OrderCondition}, an ETDRK method of order two requires at least two internal stages. Solving the corresponding stiff-order conditions directly yields a one-parameter family of schemes \cite{Hochbruck2005SINUM_OrderCondition}, parameterized by $c_2 \in (0, 1]$:
\begin{equation}\label{tab:HO_ETDRK2}
  \renewcommand{\arraystretch}{1.2}
  \begin{array}{c|cc}
    0      &                        & \\
    c_2    & c_2 \varphi_{1,2}      & \\
    \hline & \varphi_1-\frac{1}{c_2}\varphi_2 & \frac{1}{c_2}\varphi_2
  \end{array}.
\end{equation}
For $c_2\in(0,1]$, Theorem~\ref{main_thm} gives $\htaumax(c_2)=\frac{\varepsilon}{C_L}\hxzero(c_2)$. We maximize $\htaumax$ over $c_2$ to obtain the optimal method.

For \eqref{tab:HO_ETDRK2}, Algorithm~\ref{alg:htaumax} starts from \eqref{eq:b_j_z}.
Since the first stage is skipped ($C_1\equiv0$), only
\[
  b_2=\frac{1}{c_2}\varphi_2
\]
enters the computation, hence
\[
  p_2(\theta)=\frac{\theta}{c_2},\qquad
  \hbet_2=\int_0^1\bigl|p_2(\theta)\bigr|\,\dd\theta=\frac{1}{2c_2}.
\]
The only nonzero internal coefficient is $a_{21}$, which belongs to the first column $j=1$.
This column is skipped in Algorithm~\ref{alg:htaumax}, so no $\halp_{ij}$ arises.
With $C_2(x)=1$,
\[
  I(x)=\Biggl[\int_0^1\bigl(C_2(x)\bigl|p_2(\theta)\bigr|\bigr)^2\,\dd\theta\Biggr]^{\frac12}
  =\frac{1}{\sqrt{3}\,c_2},\qquad
  S(x)=C_2(x)\,\hbet_2=\frac{1}{2c_2},
\]
and \eqref{def_H} becomes
\begin{equation*}
  H(x)=\frac{x}{c_2}\left(\frac{1}{\sqrt{3}\sqrt{1-x}}+\frac{1}{2}\right)-1,
  \qquad x\in[0,1).
\end{equation*}
The equation $H(x)=0$ is equivalent to
\begin{equation}\label{eq:HO_ETDRK2_x0}
  \frac{x}{\sqrt{3}\sqrt{1-x}}+\frac{x}{2}=c_2.
\end{equation}
Since the left-hand side of \eqref{eq:HO_ETDRK2_x0} is strictly increasing in $x$ on $(0,1)$, the root $\hxzero(c_2)$ in \eqref{def_x0} is uniquely determined and strictly increasing in $c_2$.
The maximum over $c_2\in(0,1]$ is attained at $c_2=1$, where $\hxzero(1)=\frac{2}{3}$ and $\htaumax(1)=\frac{2}{3}\,\frac{\varepsilon}{C_L}$.

The choice $c_2=1$ corresponds to the classical ETDRK2 method. This method was proved in \cite{Fu2022JCP_ED4ETD2} to preserve original energy dissipation unconditionally when combined with a stabilization technique. 
In addition, this stabilized ETDRK2 method was proved to preserve the maximum bound principle for Allen--Cahn-type equations~\cite{Du2021SIAMRev_MP4ETD,Du2019SINUM_MP4ETD}.

\subsection{Third-order methods} \label{subsec:third_order}

A third-order ETDRK method requires at least three stages \cite{Hochbruck2005SINUM_OrderCondition}.
In this subsection we evaluate the step-size restriction $\htaumax$ from Theorem~\ref{main_thm} for three representative three-stage schemes: the stiff-order Hochbruck--Ostermann ETDRK3 method \cite{Hochbruck2005SINUM_OrderCondition}, and the classical-order Cox--Matthews \cite{Cox2002JCP_ETD} and Celledoni--Marthinsen--Owren \cite{CelledoniMarthinsenOwren2003} ETDRK3 methods.
We record their Butcher tableaux below.

The Hochbruck--Ostermann ETDRK3 family depends on a single parameter $c_2\in(0,1]$:
\begin{equation}\label{tab:HO_ETDRK3_1}
  \renewcommand{\arraystretch}{1.2}
  \begin{array}{c|ccc}
    0      &                                         &                                    & \\
    c_2    & c_2\varphi_{1,2}                         &                                    & \\
    \frac{2}{3} & \frac{2}{3}\varphi_{1,3}-\frac{4}{9c_2}\varphi_{2,3} & \frac{4}{9c_2}\varphi_{2,3} & \\
    \hline
           & \varphi_1-\frac{3}{2}\varphi_2            & 0                                  & \frac{3}{2}\varphi_2
  \end{array}
\end{equation}
The Cox--Matthews ETDRK3 scheme is
\begin{equation}\label{tab:CM_ETDRK3}
    \renewcommand{\arraystretch}{1.2}
    \begin{array}{c|ccc}
        0           &                                      &                         &                          \\
        \frac{1}{2} & \frac{1}{2} \varphi_{1,2}            &                         &                          \\
        1           & -\varphi_{1,3}                       & 2 \varphi_{1,3}         &                          \\
        \hline      & \varphi_1 - 3\varphi_2 + 4 \varphi_3 & 4\varphi_2 -8 \varphi_3 & -\varphi_2 + 4 \varphi_3
    \end{array}
\end{equation}
Finally, Celledoni--Marthinsen--Owren ETDRK3 has tableau
\begin{equation}\label{tab:CMO_ETDRK3}
    \renewcommand{\arraystretch}{1.2}
    \begin{array}{c|ccc}
      0 & & \\
      \frac{1}{3} & \frac{1}{3}\varphi_{1,2} & \\
      \frac{2}{3} & \frac{2}{3}\varphi_{1,3}-\frac{4}{3}\varphi_{2,3} & \frac{4}{3}\varphi_{2,3} \\
      \hline & \varphi_1-\frac{9}{2}\varphi_2+9\varphi_3 & 6\varphi_2-18\varphi_3 & -\frac{3}{2}\varphi_2+9\varphi_3
    \end{array} 
\end{equation}

\subsubsection{Hochbruck--Ostermann ETDRK3}
Applying Algorithm~\ref{alg:htaumax} to \eqref{tab:HO_ETDRK3_1} produces
\begin{equation*}
  H(x)=A(x)\left(1+\frac{2x}{9c_2}\right)-1,
  \qquad x\in[0,1),
\end{equation*}
where
\[
  A(x)\coloneqq \frac{\sqrt{3}}{2}\frac{x}{\sqrt{1-x}}+\frac{3}{4}x.
\]
Then
\[
  \partial_{c_2}H(x;c_2)=-A(x)\frac{2x}{9c_2^2}<0,\qquad
  \partial_x H(x;c_2)=A'(x)\Bigl(1+\frac{2x}{9c_2}\Bigr)+A(x)\frac{2}{9c_2}.
\]
Since
\[
  A'(x)=\frac{\sqrt{3}}{4}\frac{2-x}{(1-x)^{3/2}}+\frac{3}{4}>0,\qquad x\in(0,1),
\]
it follows that $\partial_x H(x;c_2)>0$. 
For each $c_2\in(0,1]$, the smallest positive root $\hxzero(c_2)$ in \eqref{def_x0} satisfies $H(\hxzero(c_2);c_2)=0$, and implicit differentiation gives
\[
  \hxzero'(c_2)=-\frac{\partial_{c_2}H(\hxzero(c_2);c_2)}{\partial_x H(\hxzero(c_2);c_2)}>0.
\]
Hence $\hxzero(c_2)$ is strictly increasing in $c_2$.
The maximum over $(0,1]$ is attained at $c_2=1$, where $\hxzero(1)\approx0.4677$ and $\htaumax(1)\approx0.4677\,\frac{\varepsilon}{C_L}$.

The remaining third-order schemes are treated by Algorithm~\ref{alg:htaumax}.
Cox--Matthews \eqref{tab:CM_ETDRK3} yields \(\hxzero\approx0.3832\), hence \(\htaumax\approx0.3832\,\varepsilon/C_L\).
Celledoni--Marthinsen--Owren \eqref{tab:CMO_ETDRK3} yields \(\hxzero\approx0.2222\), hence \(\htaumax\approx0.2222\,\varepsilon/C_L\).
See also Table~\ref{tab:hxzero_classical}.

\subsection{Fourth-order methods} \label{subsec:fourth_order}

In this subsection we evaluate the step-size restriction $\htaumax$ from Theorem~\ref{main_thm} for four representative schemes: the stiff-order Hochbruck--Ostermann ETDRK4 method \cite{Hochbruck2005SINUM_OrderCondition}, and the classical-order Cox--Matthews \cite{Cox2002JCP_ETD}, Krogstad \cite{Krogstad2005JCP}, and Strehmel--Weiner \cite{BOOK_Strehmel1992} ETDRK4 methods.
We record their Butcher tableaux below.

The stiff-order Hochbruck--Ostermann ETDRK4 scheme is
\begin{equation}\label{tab:HO_ETDRK4}
  \renewcommand{\arraystretch}{1.2}
  \begin{array}{c|ccccc}
    0           & ~                                           & ~             & ~             & ~                                 & ~                         \\
    \frac{1}{2} & \frac{1}{2} \varphi_{1,2}                   & ~             & ~             & ~                                 & ~                         \\
    \frac{1}{2} & \frac{1}{2} \varphi_{1,3}-\varphi_{2,3}     & \varphi_{2,3} & ~             & ~                                 & ~                         \\
    1           & \varphi_{1,4}-2 \varphi_{2,4}               & \varphi_{2,4} & \varphi_{2,4} & ~                                 & ~                         \\
    \frac{1}{2} & \frac{1}{2} \varphi_{1,5}-2 a_{5,2}-a_{5,4} & a_{5,2}       & a_{5,2}       & \frac{1}{4} \varphi_{2,5}-a_{5,2} & ~                         \\
    \hline
    ~           & \varphi_1 - 3 \varphi_2 + 4 \varphi_3       & 0             & 0             & -\varphi_2 + 4 \varphi_3          & 4 \varphi_2 - 8 \varphi_3
  \end{array},
\end{equation}
with
\[
  a_{52} = \frac{1}{2} \varphi_{2,5} - \varphi_{3,4} + \frac{1}{4} \varphi_{2,4} -\frac{1}{2} \varphi_{3,5}.
\]
The three classical-order ETDRK4 schemes are the Cox--Matthews method
\begin{equation}\label{tab:CM_ETDRK4}
  \renewcommand{\arraystretch}{1.2}
  \begin{array}{c|cccc}
    0 & & & & \\
    \frac{1}{2} & \frac{1}{2}\varphi_{1,2} & & & \\
    \frac{1}{2} & 0 & \frac{1}{2}\varphi_{1,3} & & \\
    1 & \frac{1}{2}\varphi_{1,3}(\varphi_{0,3}-1) & 0 & \varphi_{1,3} & \\
    \hline
      & \varphi_1-3\varphi_2+4\varphi_3
      & 2\varphi_2-4\varphi_3
      & 2\varphi_2-4\varphi_3
      & 4\varphi_3-\varphi_2
  \end{array}.
\end{equation}
the Krogstad method
\begin{equation}\label{tab:Krogstad_ETDRK4}
  \renewcommand{\arraystretch}{1.2}
  \begin{array}{c|cccc}
    0 & & & & \\
    \frac{1}{2} & \frac{1}{2}\varphi_{1,2} & & & \\
    \frac{1}{2} & \frac{1}{2}\varphi_{1,3}-\varphi_{2,3} & \varphi_{2,3} & & \\
    1 & \varphi_{1,4}-2\varphi_{2,4} & 0 & 2\varphi_{2,4} & \\
    \hline
      & \varphi_1-3\varphi_2+4\varphi_3
      & 2\varphi_2-4\varphi_3
      & 2\varphi_2-4\varphi_3
      & -\varphi_2+4\varphi_3
  \end{array}.
\end{equation}
and the Strehmel--Weiner method
\begin{equation}\label{tab:SW_ETDRK4}
  \renewcommand{\arraystretch}{1.2}
  \begin{array}{c|cccc}
    0 & & & & \\
    \frac{1}{2} & \frac{1}{2}\varphi_{1,2} & & & \\
    \frac{1}{2} & \frac{1}{2}\varphi_{1,3}-\frac{1}{2}\varphi_{2,3} & \frac{1}{2}\varphi_{2,3} & & \\
    1 & \varphi_{1,4}-2\varphi_{2,4} & -2\varphi_{2,4} & 4\varphi_{2,4} & \\
    \hline
      & \varphi_1-3\varphi_2+4\varphi_3
      & 0
      & 4\varphi_2-8\varphi_3
      & -\varphi_2+4\varphi_3
  \end{array}.
\end{equation}

The four tableaux are treated by Algorithm~\ref{alg:htaumax}.
Hochbruck--Ostermann \eqref{tab:HO_ETDRK4} yields \(\hxzero\approx0.3675\).
Cox--Matthews \eqref{tab:CM_ETDRK4} and Krogstad \eqref{tab:Krogstad_ETDRK4} yield identical integral bounds \(\halp_{ij}\) and \(\hbet_i\), and therefore the same root \(\hxzero\approx0.3887\).
Strehmel--Weiner \eqref{tab:SW_ETDRK4} yields \(\hxzero\approx0.3451\).
In each case \(\htaumax=\hxzero\,\varepsilon/C_L\); see also Table~\ref{tab:hxzero_classical}.

\subsection{Fifth-order methods}\label{subsec:fifth_order}

Luan and Ostermann \cite{Luan2014HighOrderExpRK} proved that no ETDRK method with six or fewer stages can achieve fifth-order convergence for semilinear stiff problems.
They then constructed the first ETDRK method of fifth order, requiring eight stages.
In this subsection we evaluate the step-size restriction $\htaumax$ from Theorem~\ref{main_thm} for the Luan--Ostermann ETDRK5 method; its Butcher tableau is
\begin{equation}\label{tab:LO_ETDRK5}
        \renewcommand{\arraystretch}{1.2}
        \resizebox{\textwidth}{!}{$
        \begin{array}{l|cccccccc}
            0  & ~& ~ & ~&   ~ &   ~ &   ~ &   ~ &   ~ \\
            \frac{1}{2} & \frac{1}{2} \varphi_{1,2}      &  ~ &   ~ &   ~ &   ~ &   ~ &   ~ &   ~ \\
            \frac{1}{2} & \frac{1}{2} \varphi_{1,3}-\frac{1}{2}\varphi_{2,3}        & \frac{1}{2}\varphi_{2,3} &   ~ &   ~ &   ~ &   ~ &   ~ &   ~ \\
            \frac{1}{4} & \frac{1}{4} \varphi_{1,4} - \frac{1}{8} \varphi_{2,4} & 0 &       \frac{1}{8} \varphi_{2,4}   &   ~ &   ~ &   ~ &   ~ &   ~ \\
            \frac{1}{2} & \frac{1}{2} \varphi_{1,5} - \frac{3}{2} \varphi_{2,5} + 2 \varphi_{3,5} & 0 & -\frac{1}{2} \varphi_{2,5} + 2 \varphi_{3,5} & 2 \varphi_{2,5} - 4 \varphi_{3,5} & ~ & ~ & ~ & ~ \\
            \frac{1}{5} & \frac{1}{5}\varphi_{1,6}-\frac{1}{2}a_{64}-\frac{2}{25}\varphi_{2,6} & 0 & 0 & a_{64}&\frac{2}{25} \varphi_{2,6}  -\frac{1}{2}a_{64}& & & \\
            \frac{2}{3} & a_{71} & 0 & 0 & -\frac{125}{162}a_{64} & a_{75} & a_{76} & & \\
            1 & \varphi_1 - a_{85} - a_{86} - a_{87} & 0 & 0 & 0 & a_{85} & a_{86} & a_{87} & \\
            \hline
            ~& \varphi_1 - b_6-b_7-b_8   & 0   & 0 & 0 & 0   & b_6  & b_7 & b_8
        \end{array},
        $}
    \end{equation}
    where
    \begin{align*}
      & a_{64}=\frac{8}{25} \varphi_{2,6}-\frac{32}{125} \varphi_{3,6}, \qquad a_{75} = \frac{125}{1944} a_{64}-\frac{16}{27} \varphi_{2,7}+\frac{320}{81} \varphi_{3,7}, \\
      & a_{76}=\frac{3125}{3888} a_{64}+\frac{100}{27} \varphi_{2,7}-\frac{800}{81} \varphi_{3,7}, \qquad a_{71} = \frac{2}{3}\varphi_{1,7}+\frac{125}{162}a_{64}-a_{75}-a_{76},\\
      & \phi=\frac{5}{32} a_{64}-\frac{1}{28} \varphi_{2,6}+\frac{36}{175} \varphi_{2,7}-\frac{48}{25} \varphi_{3,7}+\frac{6}{175} \varphi_{4,6}+\frac{192}{35} \varphi_{4,7}+6 \varphi_{4,8}, \\
      & a_{85}=\frac{208}{3} \varphi_{3,8}-\frac{16}{3} \varphi_{2,8}-40 \phi, \qquad a_{86}=-\frac{250}{3} \varphi_{3,8}+\frac{250}{21} \varphi_{2,8}+\frac{250}{7} \phi, \\
      & a_{87}=-27 \varphi_{3,8}+\frac{27}{14} \varphi_{2,8}+\frac{135}{7} \phi, \qquad b_6=\frac{125}{14} \varphi_2-\frac{625}{14} \varphi_3+\frac{1125}{14} \varphi_4, \\
      & b_7=-\frac{27}{14} \varphi_2+\frac{162}{7} \varphi_3-\frac{405}{7} \varphi_4, \qquad b_8=\frac{1}{2} \varphi_2-\frac{13}{2} \varphi_3+\frac{45}{2} \varphi_4 .
      \end{align*}

The Luan--Ostermann scheme \eqref{tab:LO_ETDRK5} is treated by Algorithm~\ref{alg:htaumax} and yields \(\hxzero\approx0.2651\), so \(\htaumax\approx0.2651\,\varepsilon/C_L\); see also Table~\ref{tab:hxzero_classical}.

\begin{table}[ht]
  \centering
  \caption{Smallest positive roots \(\hxzero\) in \eqref{def_x0} for second- to fifth-order ETDRK schemes.
  By Theorem~\ref{main_thm}, the energy-dissipation step-size restriction is
  \(\htaumax=\varepsilon \hxzero/C_L\).}
  \label{tab:hxzero_classical}
  \begin{tabular}{|c|l|c|}
    \hline
    Order & Scheme & \(\hxzero\) \\
    \hline
    2
      & Hochbruck--Ostermann \eqref{tab:HO_ETDRK2} ($c_2=1$)
      & $\approx 0.6667$ \\
    \hline
    \multirow{3}{*}{3}
      & Hochbruck--Ostermann \eqref{tab:HO_ETDRK3_1} ($c_2=1$)
      & $\approx 0.4677$ \\[0.35em]
    & Cox--Matthews \eqref{tab:CM_ETDRK3}
      & $\approx 0.3832$ \\[0.35em]
    & Celledoni--Marthinsen--Owren \eqref{tab:CMO_ETDRK3}
      & $\approx 0.2222$ \\
    \hline
    \multirow{4}{*}{4}
      & Hochbruck--Ostermann \eqref{tab:HO_ETDRK4}
      & $\approx 0.3675$ \\[0.35em]
    & Cox--Matthews \eqref{tab:CM_ETDRK4}
      & $\approx 0.3887$ \\[0.35em]
    & Krogstad \eqref{tab:Krogstad_ETDRK4}
      & $\approx 0.3887$ \\[0.35em]
    & Strehmel--Weiner \eqref{tab:SW_ETDRK4}
      & $\approx 0.3451$ \\
    \hline
    5
      & Luan--Ostermann \eqref{tab:LO_ETDRK5}
      & $\approx 0.2651$ \\
    \hline
  \end{tabular}
\end{table}

\subsection{Arbitrarily high-order interpolation-based ETDRK schemes}\label{subsec:interpolation_ETDRK}

In \cite{Quan_25_IMA_ETD_MP}, a family of interpolation-based ETDRK methods of arbitrary order was constructed for $L^2$ gradient flows, and the original energy was shown to decrease under an explicit step-size restriction.
That restriction is rather severe: the admissible step size decreases exponentially as the order increases.
We first recall the construction, then apply Theorem~\ref{main_thm} to the same family and obtain a substantially milder bound.

Let $m\ge 2$ denote the order of the scheme.
The method is built recursively from the variation-of-constants formula by interpolating the nonlinear term on the current step $[0,\tau]$.
For each level $k\ge 1$, one chooses $k+1$ interpolation nodes
$0=\theta_{k,0}<\cdots<\theta_{k,k}=1$ on $[0,1]$, with Lagrange basis functions
\begin{equation*}
    \ell_{k,j}(\theta)=\prod_{\substack{\ell=0\\ \ell\neq j}}^{k}\frac{\theta-\theta_{k,\ell}}{\theta_{k,j}-\theta_{k,\ell}},\qquad \theta\in[0,1].
\end{equation*}
For $k=0$ we write $\ell_{0,0}\equiv 1$.
Starting from $P_0(\sigma)\equiv g(u^n)$, the approximations $w^{[k]}$ are defined recursively by
\begin{equation*}
    w^{[k+1]}(s)=\e^{s\,\varepsilon\hL}u^n+\int_0^s \e{(s-\sigma)\varepsilon\hL}P_k(\sigma)\,\dd\sigma,
    \qquad k=0,1,\dots,m-1,
\end{equation*}
and, for $k=1,\dots,m-1$,
\begin{equation*}
    P_k(\sigma)=\sum_{j=0}^{k}\ell_{k,j}\!\left(\frac{\sigma}{\tau}\right)\,g\bigl(w^{[k]}(\theta_{k,j}\tau)\bigr),\qquad \sigma\in[0,\tau].
\end{equation*}
The update is $u^{n+1}=w^{[m]}(\tau)$.
Since $\theta_{k,0}=0$, one has $w^{[k]}(0)=u^n$ for every $k$, so that node reuses the first stage $u^n$ and is not recomputed.
The resulting scheme is an $m$th-order ETDRK method \cite{Quan_25_IMA_ETD_MP}, and it preserves equilibria because $\sum_{j=0}^{k}\ell_{k,j}\equiv 1$.

Written in the form \eqref{eq:ETDRK}, the stages consist of $u^n$ together with
$w^{[k]}(\theta_{k,j}\tau)$ for $k=1,\ldots,m-1$ and $j=1,\ldots,k$.
Hence the number of stages is
\begin{equation*}
  r=1+\sum_{k=1}^{m-1}k=1+\frac{m(m-1)}{2}.
\end{equation*}
The abscissa of the stage $w^{[k]}(\theta_{k,j}\tau)$ is $c_i=\theta_{k,j}$.
After the substitution $\sigma=\theta\tau$, the internal and weight coefficients admit the integral representations
\begin{equation}\label{eq:interp_aij}
    a_{ij}(\tau\varepsilon\hL)=\int_0^{c_i}\e{(c_i-\theta)\tau\varepsilon\hL}p_{ij}(\theta)\,\dd\theta,
    \qquad 0 \leq c_i\leq 1,
\end{equation}
and
\begin{equation*}
    b_j(\tau\varepsilon\hL)=\int_0^1\e^{(1-\theta)\tau\varepsilon\hL}q_j(\theta)\,\dd\theta,
\end{equation*}
where $p_{ij}(\theta)$ and $q_j(\theta)$ are the Lagrange polynomials associated with the construction above, as recorded in
\begin{equation*}
\renewcommand{\arraystretch}{1.2}
\resizebox{\dimexpr\displaywidth-3.5em\relax}{!}{$
\begin{array}{c|c}
    \left(c_{i}\right)_{r \times 1} & \left(p_{i, j}\right)_{r \times r}  \\
    \hline
     & \left(q_{j}\right)_{1 \times r}
\end{array}=
\begin{array}{c|ccccccccc}
    \theta_{1,0} & & & & & & \\
    \theta_{1,1} & \ell_{0,0} & & & & & \\
    \theta_{2,1} & \ell_{1,0} & \ell_{1,1} & & & & \\
    \theta_{2,2} & \ell_{1,0} & \ell_{1,1} & 0 & & & \\
    \vdots & & & & & & \\
    \theta_{m-1,1} & \ell_{m-2,0} & 0 & \cdots & 0 & \ell_{m-2,1} & \cdots & \ell_{m-2,m-2} \\
    \vdots & & & & & & \\
    \theta_{m-1,m-1} & \ell_{m-2,0} & 0 & \cdots & 0 & \ell_{m-2,1} & \cdots & \ell_{m-2,m-2} \\
    \hline
    ~& \ell_{m-1,0} & 0 & \cdots & 0 & 0 & \ell_{m-1,1} & \cdots & \ell_{m-1,m-1}
\end{array}.
$}
\end{equation*}
Stages at the same level share the interpolant of the previous level and differ only in the abscissa (the upper limit of integration).
Since $\vertiii{\e{(c_i-\theta)\tau\varepsilon\hL}}\le1$ and the integral in \eqref{eq:interp_aij} only runs up to $c_i$, we take
\begin{equation*}
    \halp_{ij}=\int_0^{c_i}|p_{ij}(\theta)|\,\dd\theta,
    \qquad 
    \hbet_j=\int_0^1|q_j(\theta)|\,\dd\theta.
\end{equation*}
These bounds play the same role as the $\varphi$-function expansion in Algorithm~\ref{alg:htaumax}.
Applying Theorem~\ref{main_thm} to this family of ETDRK methods gives the step-size restriction 
\begin{equation}\label{eq:Mrnew}
\tau\leq \widehat{\tau}_{\max,m}^{\mathrm{new}}=\frac{\varepsilon}{C_L}M_m^{\text{new}},
\end{equation}
where $M_m^{\text{new}}\coloneqq \hxzero$ is the smallest positive root of $H$ in \eqref{def_H} evaluated with \(\halp_{ij}\) and \(\hbet_i\).

In \cite{Quan_25_IMA_ETD_MP}, the same interpolation-based scheme is proved to preserve the original energy dissipation under the step-size restriction 
\begin{equation}\label{eq:Mrold}
    \tau\le\widehat{\tau}_{\max,m}^{\mathrm{old}}=\frac{\varepsilon}{C_L}M_m^{\text{old}}.
\end{equation} 
Table~\ref{tab:tau_max_comparison_Quan} reports $M_m^{\text{new}}$ and $M_m^{\text{old}}$ for schemes of order $m=2,\ldots,8$, using either equispaced or Chebyshev interpolation nodes.
It can be observed that the new bounds decrease much more slowly with~$m$, whereas the old bounds decrease exponentially with~$m$.
For Chebyshev nodes at order $m=8$, $M_8^{\text{new}}\approx0.2387$ compared with $M_8^{\text{old}}\approx1.7\times10^{-6}$, so the admissible time step is larger by a factor of about $1.4\times10^{5}$.
Even on equispaced nodes, where $M_m^{\text{new}}$ decreases faster than on Chebyshev nodes, the improvement over \cite{Quan_25_IMA_ETD_MP} is still dramatic ($M_8^{\text{new}}\approx0.1512$ versus $M_8^{\text{old}}\approx5.3\times10^{-7}$).

\begin{table}[htbp]
  \centering
  \caption{$M_m^{\text{new}}$ in \eqref{eq:Mrnew} from Theorem~\ref{main_thm} and $M_m^{\text{old}}$ in \eqref{eq:Mrold} from \cite{Quan_25_IMA_ETD_MP}, for schemes of order $m$ with Chebyshev or equispaced interpolation nodes.}
  \label{tab:tau_max_comparison_Quan}
  \begin{tabular}{|c|c|c|c|c|}
    \hline
    \multirow{2}{*}{Order $m$} & \multicolumn{2}{c|}{Chebyshev nodes} & \multicolumn{2}{c|}{Equispaced nodes} \\
    \cline{2-5}
     & $M_m^{\text{new}}$ & $M_m^{\text{old}}$ & $M_m^{\text{new}}$ & $M_m^{\text{old}}$ \\
    \hline
    2 & 0.6667 & 0.250 & 0.6667 & 0.250 \\
    3 & 0.4196 & 0.021 & 0.4196 & 0.021 \\
    4 & 0.3296 & $2.8{\times}10^{-3}$ & 0.3256 & $2.3{\times}10^{-3}$ \\
    5 & 0.2890 & $4.3{\times}10^{-4}$ & 0.2712 & $2.8{\times}10^{-4}$ \\
    6 & 0.2657 & $6.7{\times}10^{-5}$ & 0.2282 & $3.4{\times}10^{-5}$ \\
    7 & 0.2501 & $1.1{\times}10^{-5}$ & 0.1888 & $4.2{\times}10^{-6}$ \\
    8 & 0.2387 & $1.7{\times}10^{-6}$ & 0.1512 & $5.3{\times}10^{-7}$ \\
    \hline
  \end{tabular}
\end{table}

\section{Numerical experiments}\label{sec:numerical_experiments}

In this section, we verify the energy-dissipation step-size restriction of Theorem~\ref{main_thm} on the Allen--Cahn equation.
All experiments use the ETDRK schemes \eqref{eq:ETDRK} applied directly to the gradient flow, without a stabilization term, in contrast to the energy analysis for stabilized ETDRK methods \cite{Du2021SIAMRev_MP4ETD,Fu2022JCP_ED4ETD2}.
We consider
\begin{equation*}
    \partial_t u = \varepsilon\,\Delta u + \frac{1}{\varepsilon}\bigl(u-u^3\bigr)
\end{equation*}
on the domain $\Omega=(0,1)$ with $\varepsilon=0.02$, periodic boundary conditions and initial condition \(u^0(x)=0.1\sin(2\pi x)\).
We simulate until final time $T=1$.
The spatial discretization uses the second-order central-difference Laplacian $\Delta_h$ on a uniform mesh with $N=1024$ grid points.
Since $\Delta_h$ is self-adjoint and negative semidefinite on the periodic grid, $\{e^{t\varepsilon\Delta_h}\}$ is a contraction semigroup, and the energy-dissipation result of Theorem~\ref{main_thm} still applies to the fully discrete scheme by Remark~\ref{rem:fully_discrete}. In this setting, the product of the exponential of discrete Laplace matrix and a vector can be accelerated with the fast Fourier transformation.
The discrete energy associated with \eqref{eq:E} is
\begin{equation*}
  E_h(u)=-\frac{\varepsilon}{2}(\Delta_h u,u)_h+\frac{h}{\varepsilon}\sum_j F(u_j),\qquad F(u)=\frac14(u^2-1)^2,
\end{equation*}
where $h=1/N$ is the mesh size and $(\cdot,\cdot)_h$ is the discrete $L^2$ inner product.
We write $E^n=E_h(u^n)$.
The functions $\varphi_k$ are computed using the contour-averaging method with $M=64$ quadrature nodes and contour radius $R=1$, as suggested in \cite{KassamTrefethen2005SISC_ETDRK4}.
In the proof of Theorem~\ref{main_thm} we assume that $f$ is globally Lipschitz continuous with constant $C_L$.
For the Allen–Cahn nonlinearity, we consider a globally Lipschitz extension \(\widetilde f\) of \(f(u)=u-u^3\) outside \([-1.15,1.15]\), chosen so that \(\widetilde f=f\) on this interval and the derived Lipschitz constant \(\mathrm{Lip}(\widetilde f)\le3\) (see the construction of $\widetilde f$ in \cite{ShenYang2010DCDS_ACandCH}). Since all computed stage values and numerical solutions remain in this interval for \(\tau\le\widehat\tau_{\max}\), the numerical ETDRK iterates coincide with those obtained from the original cubic nonlinearity.

We test the following six schemes: Hochbruck--Ostermann ETDRK2 \eqref{tab:HO_ETDRK2} with \(c_2=1\), Hochbruck--Ostermann ETDRK3 \eqref{tab:HO_ETDRK3_1} with \(c_2=1\), Krogstad ETDRK4 \eqref{tab:Krogstad_ETDRK4}, Luan--Ostermann ETDRK5 \eqref{tab:LO_ETDRK5}, interpolation-based ETDRK6, and interpolation-based ETDRK8, both on Chebyshev nodes from Section~\ref{subsec:interpolation_ETDRK}.
For each scheme, \(\tau\) runs through logarithmically spaced multiples of \(\htaumax\).
We write \(\ttaumax\) for the largest of these values at which \(E^{n+1}\le E^n\) at every step.
Table~\ref{tab:exp_verify} records \(\htaumax\) and \(\ttaumax\), together with the earlier result on original energy dissipation for each scheme.
In this table, \(E^{n+1}\le E^n\) holds at every step whenever \(\tau\le\htaumax\), in agreement with Theorem~\ref{main_thm}, while \(\ttaumax\) is several times \(\htaumax\), so the sufficient condition is not sharp.
For Hochbruck--Ostermann ETDRK2 and ETDRK3, earlier work gave unconditional original energy dissipation only after a stabilization term is added \cite{Fu2022JCP_ED4ETD2,Fu2025ChinaMath_ED4HighOrderETD}, and the unstabilized case was not known.
For Krogstad ETDRK4 and Luan--Ostermann ETDRK5, Theorem~\ref{main_thm} appears to be the first original energy dissipation result.
For interpolation-based ETDRK6 and ETDRK8, the experiments in \cite{Quan_25_IMA_ETD_MP} already indicated that the numerical threshold is far larger than \(\widehat{\tau}_{\max,m}^{\mathrm{old}}\), and the new bound \(\widehat{\tau}_{\max,m}^{\mathrm{new}}\) now lies only a moderate factor below \(\ttaumax\).

\begin{table}[ht]
  \centering
  \caption{Theoretical step size \(\htaumax\) and numerical threshold \(\ttaumax\) (\(\varepsilon=0.02\), \(N=1024\), \(T=1\)).
  Hochbruck--Ostermann ETDRK2 and ETDRK3 use \(c_2=1\).
  Interpolation-based ETDRK6 and ETDRK8 use Chebyshev nodes.
  A dash means that no earlier energy result is known for that scheme.}
  \label{tab:exp_verify}
  \begin{tabular}{|c|c|c|c|}
    \hline
    Scheme & Earlier energy result & \(\htaumax\) & \(\ttaumax\) \\
    \hline
    \begin{tabular}{@{}c@{}}Hochbruck--Ostermann\\ ETDRK2 \eqref{tab:HO_ETDRK2}\end{tabular}
      & \begin{tabular}{@{}c@{}}unconditional dissipation,\\ with stabilization \cite{Fu2022JCP_ED4ETD2}\end{tabular}
      & \(0.00444\) & \(0.0229\) 
      \\
    \hline
    \begin{tabular}{@{}c@{}}Hochbruck--Ostermann\\ ETDRK3 \eqref{tab:HO_ETDRK3_1}\end{tabular}
      & \begin{tabular}{@{}c@{}}unconditional dissipation,\\ with stabilization \cite{Fu2025ChinaMath_ED4HighOrderETD}\end{tabular}
      & \(0.00312\) & \(0.0234\) 
      \\
    \hline
    \begin{tabular}{@{}c@{}}Krogstad\\ ETDRK4 \eqref{tab:Krogstad_ETDRK4}\end{tabular}
      & ---
      & \(0.00259\) & \(0.0284\) 
      \\
    \hline
    \begin{tabular}{@{}c@{}}Luan--Ostermann\\ ETDRK5 \eqref{tab:LO_ETDRK5}\end{tabular}
      & ---
      & \(0.00177\) & \(0.0378\) 
      \\
    \hline
    \begin{tabular}{@{}c@{}}Interpolation-based\\ ETDRK6 \eqref{eq:interp_aij}\end{tabular}
      & \(\widehat{\tau}_{\max,6}^{\mathrm{old}}=4.5\times 10^{-7}\) \cite{Quan_25_IMA_ETD_MP}
      & \(0.00177\) & \(0.0329\) 
      \\
    \hline
    \begin{tabular}{@{}c@{}}Interpolation-based\\ ETDRK8 \eqref{eq:interp_aij}\end{tabular}
      & \(\widehat{\tau}_{\max,8}^{\mathrm{old}}=1.1\times 10^{-8}\) \cite{Quan_25_IMA_ETD_MP}
      & \(0.00159\) & \(0.0370\) 
      \\
    \hline
  \end{tabular}
\end{table}

We take interpolation-based ETDRK8 as an example.
The old restriction \(\widehat{\tau}_{\max,8}^{\mathrm{old}}\) is about \(10^{-8}\) and cannot be used in practice, while \(\widehat{\tau}_{\max,8}^{\mathrm{new}}\) is about \(10^{-3}\) and and can be used in practice.
Figure~\ref{fig:exp_interp_m8_energy} records \(E^n\) (top) and \(\|u^n\|_\infty\) (bottom) at \(\tau=10^{4}\,\widehat{\tau}_{\max,8}^{\mathrm{old}}\), \(\widehat{\tau}_{\max,8}^{\mathrm{new}}\), and \(30\,\widehat{\tau}_{\max,8}^{\mathrm{new}}\).
At the first two step sizes, \(E^{n+1}\le E^n\) at every step and \(\|u^n\|_\infty\le 1\).
At \(\tau=30\,\widehat{\tau}_{\max,8}^{\mathrm{new}}\), the discrete energy oscillates.

For Hochbruck--Ostermann ETDRK2 \eqref{tab:HO_ETDRK2} with \(c_2=1\), we check whether \(\ttaumax\) depends on the spatial mesh and whether it scales with \(\varepsilon\).
The experiments in Table~\ref{tab:exp_verify} were repeated at \(\varepsilon=0.02\) on the meshes \(N\in\{256,512,1024,2048\}\).
The value of \(\ttaumax\) is the same at every \(N\), so the numerical threshold is independent of the spatial mesh.
The same experiments were repeated at \(N=1024\) and \(\varepsilon\in\{0.01,0.02,0.04\}\).
At each of these three values, \(\ttaumax\) is about \(5.16\,\htaumax\), so the numerical threshold scales linearly with \(\varepsilon\), in agreement with Theorem~\ref{main_thm}.

\begin{figure}[ht]
  \centering
  \includegraphics[width=\textwidth]{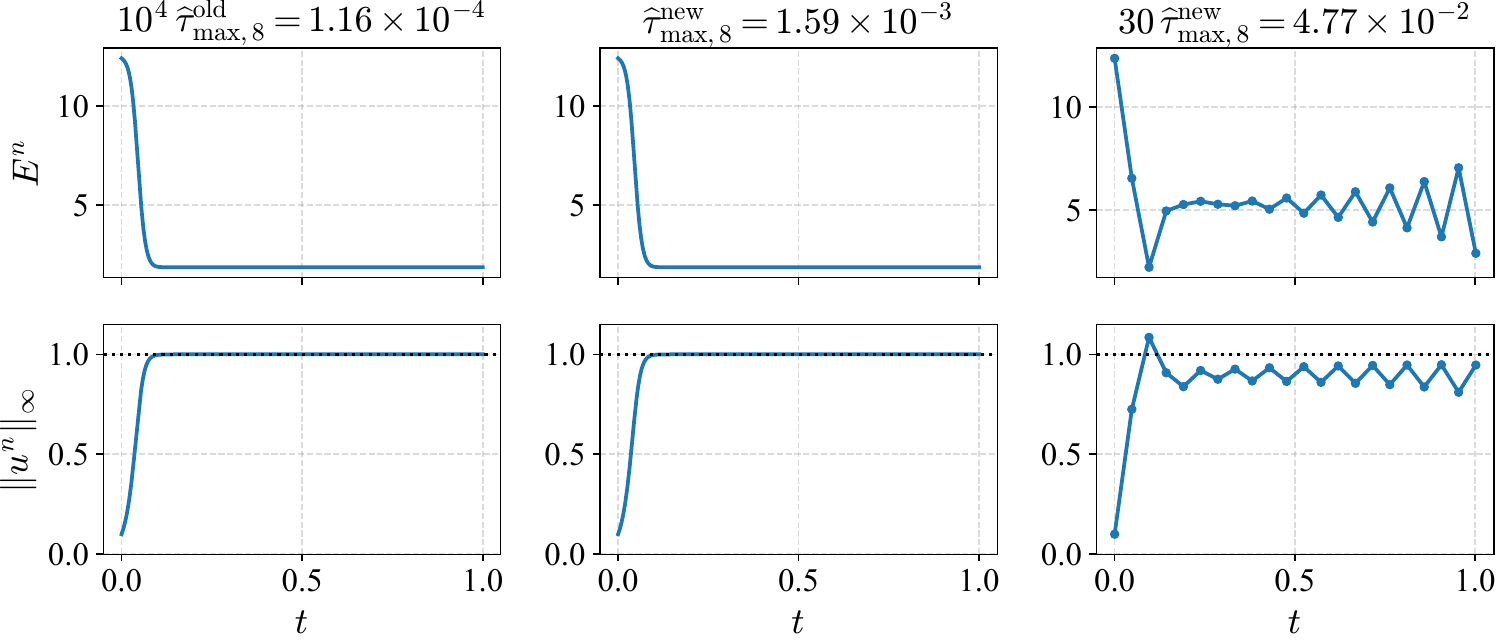}
  \caption{Discrete energy \(E^n\) (top) and infinity norm \(\|u^n\|_\infty\) (bottom) for interpolation-based ETDRK8 on Chebyshev nodes, at \(\tau=10^{4}\,\widehat{\tau}_{\max,8}^{\mathrm{old}}\), \(\widehat{\tau}_{\max,8}^{\mathrm{new}}\), and \(30\,\widehat{\tau}_{\max,8}^{\mathrm{new}}\) (\(\varepsilon=0.02\), \(N=1024\), \(T=1\)). The dashed line is \(1\).}
  \label{fig:exp_interp_m8_energy}
\end{figure}

\section{Conclusion}\label{sec:conclusion}

In this work, we have established a unified framework for the energy dissipation analysis of ETDRK schemes applied to $L^2$ gradient flows, without imposing positive-definiteness constraints on the Butcher tableau coefficients or adding stabilization terms.
We prove that for a broad class of equilibrium-preserving ETDRK methods, the original energy decreases under a mild step-size restriction, and we derive explicit restrictions for several classical ETDRK schemes.
Extending this analysis to non-\(L^2\) gradient flows and discovering high-order ETDRK schemes with unconditional original energy dissipation are natural directions for future work.

\bibliographystyle{siamplain}
\bibliography{refs}

\end{document}